\newtheorem{theorem}{Theorem}[section]
\newtheorem{lemma}[theorem]{Lemma}
\newtheorem{proposition}[theorem]{Proposition}
\newtheorem{definition}[theorem]{Definition}
\def\bbZ{\mathbb{Z}}
\def\bbR{\mathbb{R}}
\begin{document}
\title{Nonuniform average sampling in multiply generated shift-invariant subspaces of mixed
Lebesgue spaces}

\author{Qingyue Zhang\\
\footnotesize\it College of Science,  Tianjin University of Technology,
      Tianjin~300384, China\\
      \footnotesize\it     {e-mails: {jczhangqingyue@163.com}}\\ }

\maketitle

\textbf{Abstract.}\,\,
In this paper, we study nonuniform average sampling problem in multiply generated shift-invariant subspaces of mixed Lebesgue spaces. We discuss two types of average sampled values: average sampled values $\{\left \langle f,\psi_{a}(\cdot-x_{j},\cdot-y_{k}) \right \rangle:j,k\in\mathbb{J}\}$ generated by single averaging function and average sampled values $\left\{\left \langle f,\psi_{x_{j},y_{k}}\right \rangle:j,k\in\mathbb{J}\right\}$ generated by multiple averaging functions. Two fast reconstruction algorithms for this two types of average sampled values are provided. 

\textbf{Key words.}\,\,
mixed Lebesgue spaces; nonuniform average sampling; shift-invariant subspaces.

\textbf{2010 MR Subject Classification}\,\,
94A20, 94A12, 42C15, 41A58

\section{Introduction and motivation}
\ \ \ \
In 1961, Benedek firstly proposed  mixed Lebesgue spaces \cite{Benedek,Benedek2}. In the 1980s, Fernandez, and Francia, Ruiz and Torrea developed the theory of mixed Lebesgue spaces in integral operators and Calder\'on--Zygmund operators, respectively \cite{Fernandez,Rubio}. Recently, Torres and Ward, and Li, Liu and Zhang studied sampling problem in shift-invariant subspaces of mixed Lebesgue spaces \cite{Torres,Ward,LiLiu,zhangqingyue}. Mixed Lebesgue spaces generalize Lebesgue spaces. It was proposed due to considering functions that depend on independent quantities with different properties. For a function in mixed Lebesgue spaces, one can consider the integrability of each variable independently. This is distinct from traditional Lebesgue spaces. The flexibility of mixed Lebesgue spaces makes them have a crucial role to play in the study of time based partial differential equations. In this context, we study nonuniform average sampling problem in shift-invariant subspaces of mixed Lebesgue spaces.

Sampling theorem is the theoretical basis of modern pulse coded modulation communication system, and is also one of the most powerful basic tools in signal processing and image processing. In 1948, Shannon formally proposed  sampling theorem \cite{S,S1}. Shannon sampling
theorem  shows that for any $f\in L^{2}(\mathbb{R})$ with $\mathrm{supp}\hat{f}\subseteq[-T,T],$
$$f(x)=\sum_{n\in \mathbb{Z}}f\left(\frac{n}{2T}\right)\frac{\sin\pi(2Tx-n)}{\pi(2Tx-n)},$$
where the series converges uniformly on compact sets and in $L^{2}(\mathbb{R})$, and
$$\hat{f}(\xi)=\int_{\mathbb{R}}f(x)e^{-2\pi i x\xi }dx,\ \ \ \ \ \ \xi\in\mathbb{R}.$$
However, in many realistic situations, the sampling set is only a nonuniform
sampling set. For example, the transmission through the
internet from satellites only can be viewed as a nonuniform sampling problem, because there exists the loss of data packets in the transmission. In recent years, there are many results concerning nonuniform sampling problem \cite{SQ,BF1,M,Venkataramani,Christopher,Sun,Zhou}.
Uniform and nonuniform sampling problems
also have been generalized to more general shift-invariant spaces \cite{Aldroubi1,Aldroubi2,Aldroubi3,Aldroubi4,Vaidyanathan,Zhang1} of the form
$$V(\phi)=\left\{\sum_{k\in\mathbb{Z}}c(k)\phi(x-k):\{c(k):k\in\mathbb{Z}\}\in \ell^{2}(\mathbb{Z})\right\}.$$

In the classical sampling theory, the sampling values are the function values of the signal at the sampling points. In practical application, due to the precision of physical equipment and so on,
it is impossible to measure the exact value of a signal at a point. And the actual measured value is the local mean value of the signal near this point. Therefore, average sampling has attracted more and more the attentions of researchers \cite{SunZhou,Portal,Ponnaian,Kang,Atreas,Aldroubi10,Aldroubisun,Xianli,sunqiyu}.

For the sampling problem in shift-invariant subspaces of mixed Lebesgue spaces, Torres and Ward studied uniform sampling problem for band-limited functions in mixed Lebesgue spaces \cite{Torres,Ward}. Li, Liu and Zhang discussed the nonuniform sampling problem in principal shift-invariant spaces of mixed Lebesgue spaces \cite{LiLiu,zhangqingyue}. In this paper, we discuss nonuniform average sampling problem in multiply generated shift-invariant subspaces of mixed Lebesgue spaces. We discuss two types of average sampled values: average sampled values $\{\left \langle f,\psi_{a}(\cdot-x_{j},\cdot-y_{k}) \right \rangle:j,k\in\mathbb{J}\}$ generated by single averaging function and average sampled values $\left\{\left \langle f,\psi_{x_{j},y_{k}}\right \rangle:j,k\in\mathbb{J}\right\}$ generated by multiple averaging functions. Two fast reconstruction algorithms for this two types of average sampled values are provided. 

The paper is organized as follows. In the next section, we give the definitions and preliminary results needed and define multiply generated shift-invariant subspaces in mixed Lebesgue spaces $L^{p,q}\left(\bbR^{d+1}\right)$. In Section 3, we give main results of this paper. Section 4 gives some useful propositions and lemmas. In Section 5, we give proofs of main results. Finally, concluding remarks are presented in Section 6.

\section{Definitions and preliminary results}
\ \ \ \
In this section, we give some definitions and preliminary results needed in this paper. First of all, we give the definition of mixed Lebesgue spaces $L^{p,q}(\Bbb R^{d+1})$.
\begin{definition}
For $1 \leq p,q <+\infty$. $L^{p,q}=L^{p,q}(\Bbb R^{d+1})$ consists of all measurable functions $f=f(x,y)$ defined on $\Bbb R\times\Bbb R^{d}$ satisfying
$$\|f\|_{L^{p,q}}=\left[\int_{\Bbb R}\left(\int_{\Bbb R^d}|f(x,y)|^{q}dy\right)^{\frac{p}{q}}dx\right]^{\frac{1}{p}}<+\infty.$$
\end{definition}
The corresponding sequence spaces are defined by $$\ell^{p,q}=\ell^{p,q}(\mathbb{Z}^{d+1})=\left\{c: \|c\|^{p}_{\ell^{p,q}}=\sum_{k_{1} \in \Bbb Z}\left(\sum_{k_{2} \in \Bbb Z^d }
|c(k_{1},k_{2})|^{q}\right)^{\frac{p}{q}}
<+\infty\right\}.$$


In order to control the local behavior of functions, we introduce mixed Wiener amalgam spaces $W(L^{p,q})(\Bbb R^{d+1})$. 
\begin{definition}
For $1\leq p,q<\infty$, if a measurable function $f$ satisfies
$$\|f\|^{p}_{W(L^{p,q})}:=\sum_{n\in \Bbb Z}\sup_{x\in[0,1]}\left[\sum_{l\in \Bbb Z^d}\sup_{y\in [0,1]^d}|f(x+n,y+l)|^{q}\right]^{p/q}<\infty,$$
then we say that $f$ belongs to the mixed Wiener amalgam space $W(L^{p,q})=W(L^{p,q})(\Bbb R^{d+1})$.
\end{definition} 
For $1\leq p,q<\infty$, let $W_{0}\left ( L^{p,q} \right )$ denote the space of all continuous functions in $W(L^{p,q})$.

For $1\leq p<\infty,$ if a function $f$ satisfies
$$\|f\|^{p}_{W(L^{p})}:=\sum_{k\in \Bbb Z^{d+1}}\mathrm{ess\sup}_{x\in[0,1]^{d+1}} |f(x+k)|^{p}<\infty,$$
then we say that $f$ belongs to the Wiener amalgam space $W(L^{p})=W(L^{p})(\Bbb R^{d+1}).$
Obviously, $W(L^{p})\subset W(L^{p,p}).$

For $1\leq p< \infty $, let $W_{0}\left ( L^{p} \right )$ denote the space of all continuous functions in $W(L^{p})$. 
Let $B$ be a Banach space. $(B)^{(r)}$ denotes $r$ copies $B\times\cdots\times B$ of $B$.

For any $f,g\in L^{2}(\mathbb{R}^{d+1})$, define their convolution
$$(f*g)(x)=\int_{\mathbb{R}^{d+1}} f(y)g(x-y)dy.$$

The following is a preliminary result.

\begin{lemma}\label{convolution relation}
If $f\in L^{1}(\bbR^{d+1})$ and $g\in W(L^{1,1})$, then $f*g\in W(L^{1,1})$ and
\[
\|f*g\|_{W(L^{1,1})}\leq\|g\|_{W(L^{1,1})}\|f\|_{L^{1}}.
\]
\end{lemma}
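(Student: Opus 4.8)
The plan is to reduce the amalgam norm of $f*g$ to an integral of \emph{translated} amalgam norms of $g$ weighted by $|f|$, and then to control the effect of translation. Throughout I write points of $\bbR^{d+1}$ as $(x,y)$ with $x\in\bbR$ and $y\in\bbR^d$, and recall that for $p=q=1$ the norm reads
$$\|h\|_{W(L^{1,1})}=\sum_{n\in\bbZ}\sup_{x\in[0,1]}\sum_{l\in\bbZ^d}\sup_{y\in[0,1]^d}|h(x+n,y+l)|.$$
Note first that $W(L^{1,1})\subset L^{1}$ (one checks $\|h\|_{L^{1}}\le\|h\|_{W(L^{1,1})}$ by replacing each integral over a unit cell by the corresponding supremum), so with $f\in L^{1}$ the convolution $f*g$ is defined almost everywhere and the statement is meaningful.

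The engine of the argument is the pointwise bound
$$|(f*g)(x+n,y+l)|\le\int_{\bbR^{d+1}}|f(s,t)|\,|g(x+n-s,y+l-t)|\,ds\,dt,$$
into which I would successively push the four operations $\sup_{y\in[0,1]^d}$, $\sum_{l\in\bbZ^d}$, $\sup_{x\in[0,1]}$, $\sum_{n\in\bbZ}$. Each supremum passes inside the integral by the elementary inequality $\sup\int\le\int\sup$ (legitimate since the integrand is nonnegative), and each sum passes inside by Tonelli's theorem. This produces
$$\|f*g\|_{W(L^{1,1})}\le\int_{\bbR^{d+1}}|f(s,t)|\left[\sum_{n}\sup_{x\in[0,1]}\sum_{l}\sup_{y\in[0,1]^d}|g(x+n-s,y+l-t)|\right]ds\,dt,$$
and the bracketed quantity is exactly $\|g(\cdot-s,\cdot-t)\|_{W(L^{1,1})}$, the amalgam norm of the translate of $g$ by $(s,t)$.

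It remains to control this translated norm by $\|g\|_{W(L^{1,1})}$, and this is the step carrying the real content. Writing $s=n_0+\sigma$ and $t=l_0+\tau$ with $(n_0,l_0)\in\bbZ^{d+1}$ and $(\sigma,\tau)\in[0,1)\times[0,1)^d$, the integer part $(n_0,l_0)$ merely relabels the summation indices $n,l$ and leaves the norm unchanged, so one may reduce to the fractional shift $(\sigma,\tau)$. For such a shift the cell $[n,n+1]$ in the $x$-direction moves to $[n-\sigma,n+1-\sigma]$, which meets at most two consecutive integer cells, and the same occurs in each of the $d$ coordinates of $y$. A covering argument then dominates $\sup_{x\in[0,1]}\sum_l\sup_{y\in[0,1]^d}|g(x+n-\sigma,y+l-\tau)|$ by the contributions of the finitely many standard cells it overlaps, so that $\|g(\cdot-s,\cdot-t)\|_{W(L^{1,1})}$ is controlled by $\|g\|_{W(L^{1,1})}$; combined with $\int_{\bbR^{d+1}}|f|=\|f\|_{L^{1}}$ this closes the estimate.

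I expect this translation estimate to be the main obstacle, and indeed the delicate point is the constant. Because the amalgam norm is built from suprema over a \emph{fixed} integer lattice of unit cells, a non-integer shift distributes the mass of $g$ across neighbouring cells, and one must verify carefully how the resummed split pieces compare with $\|g\|_{W(L^{1,1})}$; for integer shifts the norm is genuinely invariant, while for general shifts the covering produces an overlap factor that one has to track to recover the sharp bound. An alternative and cleaner packaging would be to identify $L^{1}=W(L^{1},\ell^{1})$ and to read $W(L^{1,1})$ as the amalgam with local $L^{\infty}$ and global $\ell^{1}$ control, after which the inclusion follows from the general amalgam convolution relation driven by $L^{\infty}*L^{1}\hookrightarrow L^{\infty}$ and $\ell^{1}*\ell^{1}\hookrightarrow\ell^{1}$; I would nonetheless prefer the direct computation above, as it is self-contained and keeps the dependence on $f$ and $g$ fully explicit.
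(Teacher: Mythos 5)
Your reduction of $\|f*g\|_{W(L^{1,1})}$ to $\int_{\bbR^{d+1}}|f(s,t)|\,\|g(\cdot-s,\cdot-t)\|_{W(L^{1,1})}\,ds\,dt$ by pushing the suprema and sums through the integral is exactly the first half of the paper's own proof, and the obstacle you single out --- bounding the amalgam norm of a non-integer translate --- is precisely where the real content sits. Your suspicion about the constant is well founded, and in fact it is fatal to the inequality as stated: the covering argument you sketch gives $\|g(\cdot-s,\cdot-t)\|_{W(L^{1,1})}\le 2^{d+1}\|g\|_{W(L^{1,1})}$ (a fractionally shifted unit cell meets two consecutive integer cells in each of the $d+1$ coordinates), and the factor $2^{d+1}$ cannot be removed. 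Concretely, let $g=\chi_{[0.4,0.6]^{d+1}}$, so $\|g\|_{W(L^{1,1})}=1$, and let $f$ be a nonnegative approximate identity concentrated at $(0.5,\dots,0.5)$ with $\|f\|_{L^{1}}=1$. Then $f*g$ equals $1$ at the lattice point $(1,\dots,1)$ and is supported near $[0.9,1.1]^{d+1}$, so each of the $2^{d+1}$ closed unit cells whose closure contains $(1,\dots,1)$ contributes supremum $1$, giving $\|f*g\|_{W(L^{1,1})}=2^{d+1}>\|g\|_{W(L^{1,1})}\|f\|_{L^{1}}$. So your argument, carried out honestly, proves the lemma only in the form $\|f*g\|_{W(L^{1,1})}\le 2^{d+1}\|g\|_{W(L^{1,1})}\|f\|_{L^{1}}$, and no argument can do better; the plan is right, but you must carry the overlap factor through to the conclusion rather than hope to recover constant $1$. (Your alternative packaging via $L^{\infty}*L^{1}\hookrightarrow L^{\infty}$ and $\ell^{1}*\ell^{1}\hookrightarrow\ell^{1}$ incurs the same covering constant.)

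For comparison, the paper's proof follows your computation verbatim up to the translated norm and then simply replaces $\sum_{k}\sup_{x}|g(x+k-y)|$ by $\sum_{k}\sup_{x}|g(x+k)|$, i.e.\ it asserts without comment that translation does not increase the $W(L^{1,1})$ norm; the example above shows this is false for non-integer shifts, so the gap you flagged is present, unacknowledged, in the paper itself. The repair is to state the lemma with the constant $2^{d+1}$ (or an unspecified $C(d)$). This is harmless downstream: the lemma is invoked only to conclude that $\phi_{i}-\phi_{i}*\psi^{*}_{a}$ lies in $W_{0}(L^{1,1})$ and in limiting estimates where a fixed multiplicative constant is irrelevant.
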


\begin{proof}
Since
\begin{eqnarray*}
&&\|f*g\|_{W(L^{1,1})}=\sum_{k_{1}\in\bbZ}\sup_{x_{1}\in[0,1]}\sum_{k_{2}\in\bbZ^{d}}\sup_{x_{2}\in[0,1]^{d}}\\
&&\quad\quad\left|\int\limits_{y_{1}\in\bbR}\int\limits_{y_{2}\in\bbR^{d}}
f(y_{1},y_{2})g(x_{1}+k_{1}-y_{1},x_{2}+k_{2}-y_{2}))dy_{2}dy_{1}\right|\\
&&\quad\leq \sum_{k_{1}\in\bbZ}\sup_{x_{1}\in[0,1]}\sum_{k_{2}\in\bbZ^{d}}\sup_{x_{2}\in[0,1]^{d}}\\
&&\quad\quad\int\limits_{y_{1}\in\bbR}\int\limits_{y_{2}\in\bbR^{d}}
|f(y_{1},y_{2})||g(x_{1}+k_{1}-y_{1},x_{2}+k_{2}-y_{2}))|dy_{2}dy_{1}\\
&&\quad\leq \int\limits_{y_{1}\in\bbR}\int\limits_{y_{2}\in\bbR^{d}}
|f(y_{1},y_{2})|\sum_{k_{1}\in\bbZ}\sup_{x_{1}\in[0,1]}\sum_{k_{2}\in\bbZ^{d}}\sup_{x_{2}\in[0,1]^{d}}|g(x_{1}+k_{1}-y_{1},x_{2}+k_{2}-y_{2}))|dy_{2}dy_{1}\\
&&\quad\leq \int\limits_{y_{1}\in\bbR}\int\limits_{y_{2}\in\bbR^{d}}
|f(y_{1},y_{2})|\sum_{k_{1}\in\bbZ}\sup_{x_{1}\in[0,1]}\sum_{k_{2}\in\bbZ^{d}}\sup_{x_{2}\in[0,1]^{d}}|g(x_{1}+k_{1},x_{2}+k_{2}))|dy_{2}dy_{1}\\
&&\quad= \int\limits_{y_{1}\in\bbR}\int\limits_{y_{2}\in\bbR^{d}}|f(y_{1},y_{2})|\|g\|_{W(L^{1,1})}dy_{2}dy_{1}\\
&&\quad= \|g\|_{W(L^{1,1})}\int\limits_{y_{1}\in\bbR}\int\limits_{y_{2}\in\bbR^{d}}|f(y_{1},y_{2})|dy_{2}dy_{1}\\
&&\quad=\|g\|_{W(L^{1,1})}\|f\|_{L^{1}},
\end{eqnarray*}
the desired result in Lemma \ref{convolution relation} is following. 
\end{proof}

\subsection{Shift-invariant spaces}

\ \ \ \ For $\Phi=(\phi_{1},\phi_{2},\cdots,\phi_{r})^{T}\in W(L^{1,1})^{(r)}$, the multiply generated shift-invariant space in the mixed Legesgue spaces $L^{p,q}$ is defined by
\begin{eqnarray*}
&&V_{p,q}(\Phi)=\left\{\sum_{i=1}^{r}\sum_{k_{1}\in \Bbb Z}\sum_{k_{2}\in \Bbb Z^{d}}c_{i}(k_{1},k_{2})\phi_{i}(\cdot-k_{1},\cdot-k_{2}):\right.\\
&& \quad\quad\quad\quad\quad\quad\quad\quad\left.c_{i}=\left\{c_{i}(k_{1},k_{2}):k_{1}\in \Bbb Z,k_{2}\in\Bbb Z^{d}\right\}\in \ell^{p,q},\,1\leq i\leq r\right\}.
\end{eqnarray*}
It is easy to see that the three sum pointwisely converges almost everywhere. In fact, for any $1\leq i\leq r$, $c_{i}=\left\{c_{i}(k_{1},k_{2}):k_{1}\in \Bbb Z,k_{2}\in\Bbb Z^{d}\right\}\in\ell^{p,q}$ derives $c_{i}\in \ell^{\infty}.$ This combines $\Phi=(\phi_{1},\phi_{2},\cdots,\phi_{r})^{T}\in W(L^{1,1})^{(r)}$ gets
\begin{eqnarray*}
\sum_{i=1}^{r}\sum_{k_{1}\in \Bbb Z}\sum_{k_{2}\in \Bbb Z^{d}}\left|c_{i}(k_{1},k_{2})\phi_{i}(x-k_{1},y-k_{2})\right|&\leq& \sum_{i=1}^{r}\|c_{i}\|_\infty\sum_{k_{1}\in \Bbb Z}\sum_{k_{2}\in \Bbb Z^{d}}|\phi_{i}(x-k_{1},y-k_{2})|\\
&\leq&\sum_{i=1}^{r}\|c_{i}\|_\infty\| \phi_{i} \|_{W(L^{1,1})}<\infty\,(a.e.).
 \end{eqnarray*}

The following proposition gives that multiply generated shift-invariant spaces are well-defined in $L^{p,q}$.

\begin{proposition}\cite[Theorem 2.8]{zhangqingyue}\label{thm:stableup}
Assume that $1\leq p,q<\infty $ and $\Phi=(\phi_{1},\phi_{2},\cdots,\phi_{r})^{T}\in W(L^{1,1})^{(r)}$.
Then for any $C=(c_{1},c_{2},\cdots,c_{r})^{T}\in (\ell^{p,q})^{(r)}$, the function
\[
f=\sum_{i=1}^{r}\sum_{k_{1}\in \Bbb Z}\sum_{k_{2}\in \Bbb Z^d}c_{i}(k_{1},k_{2})\phi_{i}(\cdot-k_{1},\cdot-k_{2})
\]
 belongs to $L^{p,q}$ and there exist $D_{1}, D_{2}>0$ such that
$$
D_{1}\|f\|_{L^{p,q}}\leq\left(\sum_{i=1}^{r}\|c_{i}\|^{2}_{\ell^{p,q}}\right)^{1/2}\leq D_{2}\|f\|_{L^{p,q}}.
$$
\end{proposition}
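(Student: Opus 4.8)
The plan is to split the asserted two-sided estimate into a synthesis (upper) bound and an analysis (stability) bound, and to handle them by separate mechanisms. Since there are only finitely many generators, $\left(\sum_{i=1}^{r}\|c_{i}\|_{\ell^{p,q}}^{2}\right)^{1/2}$ is comparable, with constants depending only on $r$, to $\sum_{i=1}^{r}\|c_{i}\|_{\ell^{p,q}}$; thus it suffices throughout to control each generator separately and then sum.

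First I would prove the left-hand inequality $D_{1}\|f\|_{L^{p,q}}\leq\left(\sum_{i}\|c_{i}\|^{2}_{\ell^{p,q}}\right)^{1/2}$, which is exactly the boundedness of the synthesis map (and in particular yields $f\in L^{p,q}$). For a single generator this is the amalgam estimate
\[
\left\|\sum_{k_{1}\in\bbZ}\sum_{k_{2}\in\bbZ^{d}}c(k_{1},k_{2})\phi(\cdot-k_{1},\cdot-k_{2})\right\|_{L^{p,q}}\leq\|\phi\|_{W(L^{1,1})}\,\|c\|_{\ell^{p,q}},
\]
proved by the cube-by-cube splitting already used in Lemma \ref{convolution relation}: on each translated unit cube $n+[0,1]^{d+1}$ the supremum of $|\sum_{k}c(k)\phi(\cdot-k)|$ is dominated by the discrete convolution $(|c|*a)(n)$, where $a(n)=\sup_{t\in[0,1]^{d+1}}|\phi(t+n)|$ satisfies $\|a\|_{\ell^{1,1}}=\|\phi\|_{W(L^{1,1})}$; one then combines $\|f\|_{L^{p,q}}\leq\|f\|_{W(L^{p,q})}$ with the discrete mixed Young inequality $\|c*a\|_{\ell^{p,q}}\leq\|c\|_{\ell^{p,q}}\|a\|_{\ell^{1,1}}$. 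Summing over $1\leq i\leq r$ and using $\sum_{i}t_{i}\leq\sqrt{r}\,(\sum_{i}t_{i}^{2})^{1/2}$ gives $\|f\|_{L^{p,q}}\lesssim\left(\sum_{i}\|c_{i}\|^{2}\right)^{1/2}$.

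The substantive direction is the right-hand inequality $\left(\sum_{i}\|c_{i}\|^{2}\right)^{1/2}\leq D_{2}\|f\|_{L^{p,q}}$. This cannot hold for a completely arbitrary $\Phi$ (linearly dependent integer shifts would permit $f=0$ with $C\neq0$), so it is precisely here that stability of the integer shifts of $\Phi$ enters, and I would route the argument through dual generators. On the Fourier side one forms the $r\times r$ Gramian
\[
G(\xi)=\left(\sum_{l}\widehat{\phi_{i}}(\xi+l)\,\overline{\widehat{\phi_{j}}(\xi+l)}\right)_{i,j=1}^{r},
\]
whose boundedness above and below is equivalent to $L^{2}$-stability of the shifts; the dual generators are then defined by $\widehat{\tilde\Phi}=G^{-1}\widehat{\Phi}$, so that $c_{i}(k_{1},k_{2})=\langle f,\tilde\phi_{i}(\cdot-k_{1},\cdot-k_{2})\rangle$. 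Granting $\tilde\Phi\in W(L^{1,1})^{(r)}$, the coefficients are produced by an analysis (sampling) map: writing $\langle f,\tilde\phi_{i}(\cdot-k)\rangle=(f*\tilde\phi_{i}^{\,*})(k)$ with $\tilde\phi_{i}^{\,*}(x)=\overline{\tilde\phi_{i}(-x)}$, I would combine a mixed Young inequality $\|f*g\|_{W(L^{p,q})}\leq\|f\|_{L^{p,q}}\|g\|_{W(L^{1,1})}$ with the elementary sampling bound $\|\{F(k)\}_{k}\|_{\ell^{p,q}}\leq\|F\|_{W(L^{p,q})}$ to obtain $\|c_{i}\|_{\ell^{p,q}}\lesssim\|\tilde\phi_{i}\|_{W(L^{1,1})}\|f\|_{L^{p,q}}$, and then sum over $i$.

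The main obstacle is the single ingredient invoked above, namely that the dual generators inherit the amalgam regularity, $\tilde\Phi\in W(L^{1,1})^{(r)}$. This is a Wiener's-lemma (spectral invariance) statement: the Gramian is a matrix whose entries are built from integer-shift inner products of $W(L^{1,1})$-functions and hence belong to a Banach $*$-algebra with $\ell^{1,1}$-type decay, and its invertibility on $L^{2}$ forces $G^{-1}$ to remain in the same algebra, so that $G^{-1}\widehat{\Phi}$ corresponds to functions still lying in $W(L^{1,1})$. Establishing this spectral invariance in the mixed amalgam setting is the technical heart of the argument; once it is available, both halves of the claim reduce to the two elementary amalgam estimates above, with $D_{1},D_{2}$ depending only on $r$, on $\max_{i}\|\phi_{i}\|_{W(L^{1,1})}$ and $\max_{i}\|\tilde\phi_{i}\|_{W(L^{1,1})}$, and on the two-sided Gramian bounds.
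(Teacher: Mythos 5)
First, a point of reference: the paper does not prove Proposition \ref{thm:stableup} at all --- it is imported verbatim from \cite[Theorem 2.8]{zhangqingyue} --- so there is no in-paper argument to compare yours against. The only related ingredient actually present here is Proposition \ref{pro:stableup2}, which is exactly your synthesis (left-hand) estimate for a single generator. That half of your proposal is correct and standard: cube-by-cube domination by a discrete convolution plus the mixed Young inequality gives $\|f\|_{L^{p,q}}\leq\sum_{i}\|c_{i}\|_{\ell^{p,q}}\|\phi_{i}\|_{W(L^{1,1})}$, and comparing the $\ell^{1}$ and $\ell^{2}$ norms on $\bbR^{r}$ finishes it; in particular $f\in L^{p,q}$.

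For the right-hand inequality your diagnosis is correct and worth stating plainly: under the hypothesis as written ($\Phi\in W(L^{1,1})^{(r)}$ only) the inequality is false --- take $r=2$, $\phi_{2}=\phi_{1}\neq0$ and $c_{1}=-c_{2}\neq0$, so that $f=0$ while $\bigl(\sum_{i}\|c_{i}\|_{\ell^{p,q}}^{2}\bigr)^{1/2}>0$ --- so the statement is missing a stability hypothesis on the integer shifts of $\Phi$ (present in the cited source). Granting that hypothesis, your route through the Gramian and the dual generators $\tilde\Phi$ is the standard one, but the entire analytic content of the theorem is concentrated in the one step you only assert: that $G^{-1}$ remains in the relevant convolution algebra, so that $\tilde\Phi\in W(L^{1,1})^{(r)}$. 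This is a Wiener-lemma (spectral invariance) statement for $r\times r$ matrices of summably decaying shift correlations --- in the scalar, unmixed case it is the Jia--Micchelli/Aldroubi--Gr\"ochenig result for $W(L^{1})$ --- and since $W(L^{1})\subset W(L^{1,1})$ is a proper inclusion one must either prove the invariance directly in the mixed algebra or verify that the dual generators land in the smaller space $W(L^{1})$ and then use the inclusion; neither is done. Without this step your argument establishes only the upper bound. With it, the remainder of your outline (biorthogonality giving $c_{i}(k)=\langle f,\tilde\phi_{i}(\cdot-k)\rangle$, the convolution estimate $\|f*\tilde\phi_{i}^{\,*}\|_{W(L^{p,q})}\leq\|f\|_{L^{p,q}}\|\tilde\phi_{i}\|_{W(L^{1,1})}$, and sampling on $\bbZ^{d+1}$) does close the proof, so the skeleton is right but the load-bearing lemma is left unproved.
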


\section{Main results}
\ \ \ \
In this section, we mainly discuss nonuniform average sampling in multiply generated shift-invariant spaces. The main results of this section are two fast reconstruction algorithms which allow to exactly reconstruct the signals $f$ in multiply generated shift-invariant subspaces from the average sampled values of $f$.

\subsection{The case of single averaging function}
\ \ \ \
In this subsection, we will give a fast reconstruction algorithm which allows to exactly reconstruct the signals $f$ in multiply generated shift-invariant subspaces from the average sampled values $\{\left \langle f,\psi_{a}(\cdot-x_{j},\cdot-y_{k}) \right \rangle:j,k\in\mathbb{J}\}$ of $f$. Before giving the main result of this subsection, we first give some definitions.

\begin{definition}
A bounded uniform partition of unity $\{\beta_{j,k}:j,k\in\mathbb{J}\}$ associated to $\{B_{\gamma}(x_{j},y_{k}):j,k\in\mathbb{J}\}$ is a set of functions satisfying
\begin{enumerate}
  \item $0\leq\beta_{j,k}\leq1, \forall\,j,k\in\mathbb{J},$
  \item $\mathrm{supp}\beta_{j,k}\subset B_{\gamma}(x_{j},y_{k}),$
  \item $\sum_{j\in\mathbb{J}}\sum_{k\in\mathbb{J}}\beta_{j,k}=1$.
 \end{enumerate}
Here $B_{\gamma}(x_{j},y_{k})$ is the open ball with center $(x_{j},y_{k})$ and radius $\gamma$. 
\end{definition}

If $f\in W_{0}(L^{1,1})$, we define
\[
A_{X,a}f=\sum_{j\in\mathbb{J}}\sum_{k\in\mathbb{J}}\left \langle f,\psi_{a}(\cdot-x_{j},\cdot-y_{k}) \right \rangle\beta_{j,k}=\sum_{j\in\mathbb{J}}\sum_{k\in\mathbb{J}}(f*\psi^{*}_{a})(x_{j},y_{k})\beta_{j,k},
\]
and define
\[
Q_{X}f=\sum_{j\in\mathbb{J}}\sum_{k\in\mathbb{J}}f(x_{j},y_{k})\beta_{j,k}
\]
for the quasi-interpolant of the sequence $c(j,k)=f(x_{j},y_{k})$.
Here $\psi_{a}(\cdot)=1/a^{d+1}\psi(\cdot/a)$ and $\psi^{*}_{a}(x)=\overline{\psi_{a}(-x)}$. Obviously, one has $A_{X,a}f=Q_{X}(f*\psi^{*}_{a})$.

In order to describe the structure of the sampling set $X$, we give the following definition.

\begin{definition}
If a set $X=\{(x_{j},y_{k}):k,j\in \mathbb{J},x_{k}\in\mathbb{R},y_{j}\in\mathbb{R}^{d}\}$ satisfies
\[
\bbR^{d+1}=\cup_{j,k}B_{\gamma}(x_{j},y_{k})\quad\mbox{for every}\,\gamma>\gamma_{0},
\]
then we say that the set $X$ is $\gamma_{0}$-dense in $\bbR^{d+1}$.
Here $B_{\gamma}(x_{j},y_{k})$ is the open ball with center $(x_{j},y_{k})$ and radius $\gamma$, and $\mathbb{J}$ is a countable index set.
\end{definition}

The following is main result of this subsection. 

\begin{theorem}\label{th:suanfa}
Assume that $\Phi=(\phi_{1},\phi_{2},\cdots,\phi_{r})^{T}\in W_{0}(L^{1,1})^{(r)}$ whose support is compact and $P$ is a bounded projection from $L^{p,q}$ onto $V_{p,q}(\Phi)$. Let $\psi\in W_{0}(L^{1,1})$ and $\int_{\bbR^{d+1}}\psi=1$. Then there are density $\gamma_{0}=\gamma_{0}(\Phi,\psi)>0$ and $a_{0}=a_{0}(\Phi,\psi)>0$ such that any $f\in V_{p,q}(\Phi)$ can be reconstructed
from its average samples $\{\left \langle f,\psi_{a}(\cdot-x_{j},\cdot-y_{k}) \right \rangle:j,k\in\mathbb{J}\}$ on any $\gamma\,(\gamma\leq\gamma_{0})$-dense set $X=\{(x_{j},y_{k}):j,k\in\mathbb{J}\}$  and for any $0<a\leq a_{0}$, by the following iterative algorithm:
\begin{eqnarray}\label{eq:iterative algorithm}
\left\{
\begin{array}{rl}f_{1}=&PA_{X,a}f \\
 f_{n+1}=&PA_{X,a}(f-f_{n})+f_{n}.\\
\end{array}\right.
\end{eqnarray}
The iterate $f_{n}$ converges to $f$  in the $L^{p,q}$ norm. 
Furthermore, the convergence is geometric, namely,
\[
\|f-f_{n}\|_{L^{p,q}}\leq M\alpha^{n}
\]
for some $\alpha=\alpha(\gamma,a,P,\Phi,\psi))<1$ and $M<\infty.$
\end{theorem}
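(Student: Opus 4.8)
The plan is to realize the iteration as a Neumann-series (fixed-point) scheme and to prove that $I-PA_{X,a}$ is a contraction on $V_{p,q}(\Phi)$ once the density $\gamma$ and the averaging scale $a$ are small enough. Setting $f_{0}=0$ and $e_{n}=f-f_{n}$, the recursion in $(\ref{eq:iterative algorithm})$ immediately gives $e_{n+1}=(I-PA_{X,a})e_{n}$, hence $e_{n}=(I-PA_{X,a})^{n}f$. Everything therefore reduces to finding a constant $\alpha<1$ with $\|(I-PA_{X,a})g\|_{L^{p,q}}\le\alpha\|g\|_{L^{p,q}}$ for every $g\in V_{p,q}(\Phi)$: then $\|f-f_{n}\|_{L^{p,q}}\le\alpha^{n}\|f\|_{L^{p,q}}$, which is the assertion with $M=\|f\|_{L^{p,q}}$. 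Since $P$ projects onto $V_{p,q}(\Phi)$, each $f_{n}$ stays in $V_{p,q}(\Phi)$, so the contraction need only be verified there.

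First I would remove the projection: for $g\in V_{p,q}(\Phi)$ one has $g=Pg$, so $(I-PA_{X,a})g=P(I-A_{X,a})g$ and $\|(I-PA_{X,a})g\|_{L^{p,q}}\le\|P\|\,\|(I-A_{X,a})g\|_{L^{p,q}}$. Since $A_{X,a}g=Q_{X}(g*\psi^{*}_{a})$, I split the error into an interpolation part and an averaging part,
$$(I-A_{X,a})g=(g-Q_{X}g)+Q_{X}(g-g*\psi^{*}_{a}).$$
With the oscillation $\mathrm{osc}_{\gamma}(h)(x,y)=\sup_{|(u,v)|\le\gamma}|h(x+u,y+v)-h(x,y)|$, the partition-of-unity properties $\sum_{j,k}\beta_{j,k}=1$ and $\mathrm{supp}\,\beta_{j,k}\subset B_{\gamma}(x_{j},y_{k})$ yield the elementary pointwise bounds $|g-Q_{X}g|\le\mathrm{osc}_{\gamma}(g)$ and $|Q_{X}h|\le|h|+\mathrm{osc}_{\gamma}(h)$. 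Hence
$$\|(I-A_{X,a})g\|_{L^{p,q}}\le\|\mathrm{osc}_{\gamma}(g)\|_{L^{p,q}}+\|g-g*\psi^{*}_{a}\|_{L^{p,q}}+\|\mathrm{osc}_{\gamma}(g-g*\psi^{*}_{a})\|_{L^{p,q}}.$$

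Next I would push all three terms onto the generators. Writing $g=\sum_{i}\sum_{k}c_{i}(k)\phi_{i}(\cdot-k)$, both $g$ and $g-g*\psi^{*}_{a}=\sum_{i}\sum_{k}c_{i}(k)\eta_{i}(\cdot-k)$ with $\eta_{i}:=\phi_{i}-\phi_{i}*\psi^{*}_{a}$ are lattice expansions, and $\mathrm{osc}_{\gamma}$ commutes with translation and is subadditive, so each of the three norms is dominated by a lattice expansion with the same coefficients and generators $\mathrm{osc}_{\gamma}(\phi_{i})$, $\eta_{i}$, $\mathrm{osc}_{\gamma}(\eta_{i})$ respectively. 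Applying a synthesis estimate of the form $\|\sum_{k}c(k)\eta(\cdot-k)\|_{L^{p,q}}\le C\|c\|_{\ell^{p,q}}\|\eta\|_{W(L^{1,1})}$ (a Section~4 lemma in the spirit of Lemma~\ref{convolution relation}), together with the upper frame bound $(\sum_{i}\|c_{i}\|_{\ell^{p,q}}^{2})^{1/2}\le D_{2}\|g\|_{L^{p,q}}$ of Proposition~\ref{thm:stableup}, gives
$$\|(I-A_{X,a})g\|_{L^{p,q}}\le C\sum_{i=1}^{r}\Big(\|\mathrm{osc}_{\gamma}(\phi_{i})\|_{W(L^{1,1})}+\|\eta_{i}\|_{W(L^{1,1})}+\|\mathrm{osc}_{\gamma}(\eta_{i})\|_{W(L^{1,1})}\Big)\|g\|_{L^{p,q}}.$$
Because each $\phi_{i}$ is continuous with compact support it is uniformly continuous, so $\|\mathrm{osc}_{\gamma}(\phi_{i})\|_{W(L^{1,1})}\to0$ as $\gamma\to0$; I fix $\gamma_{0}$ making the first sum small, which by monotonicity of $\mathrm{osc}_{\gamma}$ in $\gamma$ persists for all $\gamma\le\gamma_{0}$. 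Since $\int\psi=1$ makes $\psi^{*}_{a}$ an approximate identity, one has $\|\eta_{i}\|_{W(L^{1,1})}\to0$ as $a\to0$, and $\|\mathrm{osc}_{\gamma_{0}}(\eta_{i})\|_{W(L^{1,1})}\le C\|\eta_{i}\|_{W(L^{1,1})}$ controls the third sum by the second; choosing $a_{0}$ small then forces $\alpha<1$ uniformly for $\gamma\le\gamma_{0}$ and $0<a\le a_{0}$.

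The step I expect to be the main obstacle is the averaging estimate $\|\phi_{i}-\phi_{i}*\psi^{*}_{a}\|_{W(L^{1,1})}\to0$ as $a\to0$: convergence of the approximate identity has to be shown in the Wiener amalgam norm, not merely in $L^{1}$ or uniformly, and this is precisely where $\psi\in W_{0}(L^{1,1})$, $\int\psi=1$, and the continuity and compact support of $\phi_{i}$ are used. A secondary difficulty is the order of quantifiers---fixing $\gamma_{0}$ first and using the monotonicity of $\mathrm{osc}_{\gamma}$---so that a single pair $(\gamma_{0},a_{0})$ serves simultaneously for all admissible $\gamma$ and $a$.
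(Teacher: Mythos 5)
Your proposal is correct and follows essentially the same route as the paper: reduce to showing $I-PA_{X,a}$ is a contraction on $V_{p,q}(\Phi)$, split $(I-A_{X,a})g$ into the quasi-interpolation error $g-Q_{X}g$ (controlled by $\|\mathrm{osc}_{\gamma}(\phi_{i})\|_{W(L^{1,1})}$) and $Q_{X}(g-g*\psi^{*}_{a})$ (controlled via a $Q_{X}$-boundedness lemma by $\|\phi_{i}-\phi_{i}*\psi^{*}_{a}\|_{W(L^{1,1})}$), then use the upper bound of Proposition~\ref{thm:stableup} and make both quantities small by shrinking $\gamma$ and $a$. You also correctly single out the approximate-identity convergence in the $W(L^{1,1})$ norm as the key technical ingredient, which is exactly the paper's Lemma~\ref{lem:average function}, with the oscillation estimates supplied by Propositions~\ref{pro:in Wiener space} and~\ref{pro:oscillation} and the $Q_{X}$ bound by Lemma~\ref{Qoperator}.
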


\subsection{The case of multiple averaging functions}
\ \ \ \
In above subsection, we treat the case of single averaging function. However, in practice, we often encounter 
the case of multiple averaging functions. Thus, the average sampled values can be described by $\left\{\left \langle f,\psi_{x_{j},y_{k}}\right \rangle:j,k\in\mathbb{J}\right\}$.
For this case, we recover the functions $f$ exactly by using the following fast algorithm. Before giving the fast algorithm, we first define
\[
A_{X}f=\sum_{j\in\mathbb{J}}\sum_{k\in\mathbb{J}}\left \langle f,\psi_{x_{j},y_{k}} \right \rangle\beta_{j,k}.
\]

\begin{theorem}\label{th:suanfa-m}
Assume that $\Phi=(\phi_{1},\phi_{2},\cdots,\phi_{r})^{T}\in W_{0}(L^{1,1})^{(r)}$ whose support is compact and $P$ is a bounded projection from $L^{p,q}$ onto $V_{p,q}(\Phi)$.
Let the averaging sampling functions $\psi_{x_{j},y_{k}}\in W(L^{1,1})$ satisfy $\int_{\bbR^{d+1}}\psi_{x_{j},y_{k}}=1$ and $\int_{\bbR^{d+1}}|\psi_{x_{j},y_{k}}|\leq M$, where $M>0$ 
is independent of $(x_{j},y_{k})$. Then there exist density $\gamma_{0}=\gamma_{0}(\Phi,M)>0$ and $a_{0}=a_{0}(\Phi,M)>0$ such that if $X=\{(x_{j},y_{k}): j,k\in \mathbb{J}\}$  is
 $\gamma\,(\gamma\leq\gamma_{0})$-dense 
in $\bbR^{d+1}$, and if the average sampling functions $\psi_{x_{j},y_{k}}$ satisfy  $\textup{supp}\,\psi_{x_{j},y_{k}}\subseteq (x_{j},y_{k})+[-a,a]^{d+1}$ for some $0<a\leq a_{0}$, then any $f\in V_{p,q}(\Phi)$ 
can be recovered from its average samples $\left\{\left \langle f,\psi_{x_{j},y_{k}}\right \rangle:j,k\in\mathbb{J}\right\}$ by the following iterative algorithm:
\begin{eqnarray}\label{eq:iterative algorithm-m}
\left\{
\begin{array}{rl}f_{1}=&PA_{X}f \\
 f_{n+1}=&PA_{X}(f-f_{n})+f_{n}.\\
\end{array}\right.
\end{eqnarray}
In this case, the iterate $f_{n}$ converges to $f$ in the $L^{p,q}$-norm. Moreover, the convergence is geometric, that is,
\[
\|f-f_{n}\|_{L^{p,q}}\leq C\alpha^{n}
\]
for some $\alpha=\alpha(\gamma,a,P,\Phi,M)<1$ and $C<\infty.$
\end{theorem}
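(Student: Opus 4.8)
The plan is to reduce Theorem \ref{th:suanfa-m} to the same Neumann-series framework as the single-averaging-function case, the key being to show that the average-sampling operator $A_{X}$ approximates the identity on $V_{p,q}(\Phi)$ well enough that $\|I-PA_{X}\|_{L^{p,q}\to L^{p,q}}<1$. First I would establish that $A_{X}$ is a bounded operator on the relevant space, using the bounded-uniform-partition-of-unity structure together with the uniform bound $\int_{\bbR^{d+1}}|\psi_{x_{j},y_{k}}|\leq M$ and the finite-overlap property of the balls $B_{\gamma}(x_{j},y_{k})$; the convolution estimate of Lemma \ref{convolution relation} and the norm equivalence of Proposition \ref{thm:stableup} will be the two workhorses here. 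The compact support of $\Phi$ guarantees that each $f\in V_{p,q}(\Phi)$ lies in $W_{0}(L^{1,1})$-type amalgam control, so the pointwise pairings $\langle f,\psi_{x_{j},y_{k}}\rangle$ are well defined and the sums defining $A_{X}f$ converge.

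The heart of the argument is an estimate of the form
\[
\|f-A_{X}f\|_{L^{p,q}}\leq \bigl(\omega(\gamma)+\eta(a)\bigr)\|f\|_{L^{p,q}}\quad\text{for all }f\in V_{p,q}(\Phi),
\]
where $\omega(\gamma)\to0$ as $\gamma\to0$ and $\eta(a)\to0$ as $a\to0$. I would decompose the difference $f-A_{X}f$ into two pieces. Writing $\sum_{j,k}\beta_{j,k}=1$, one has
\[
f(x,y)-A_{X}f(x,y)=\sum_{j,k}\bigl(f(x,y)-\langle f,\psi_{x_{j},y_{k}}\rangle\bigr)\beta_{j,k}(x,y),
\]
and on $\mathrm{supp}\,\beta_{j,k}\subseteq B_{\gamma}(x_{j},y_{k})$ the quantity $f(x,y)-\langle f,\psi_{x_{j},y_{k}}\rangle$ is split as $\bigl(f(x,y)-f(x_{j},y_{k})\bigr)+\bigl(f(x_{j},y_{k})-\langle f,\psi_{x_{j},y_{k}}\rangle\bigr)$. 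The first term is controlled by the modulus of continuity of $f$ over a ball of radius $\gamma$, contributing the $\omega(\gamma)$ factor; because $\int\psi_{x_{j},y_{k}}=1$, the second term equals $\int\bigl(f(x_{j},y_{k})-f(s,t)\bigr)\psi_{x_{j},y_{k}}(s,t)\,ds\,dt$, and since $\mathrm{supp}\,\psi_{x_{j},y_{k}}\subseteq (x_{j},y_{k})+[-a,a]^{d+1}$, this is bounded by the oscillation of $f$ over a cube of side $2a$ times $M$, contributing the $\eta(a)$ factor. The modulus-of-continuity bounds are then promoted to $L^{p,q}$ estimates using the amalgam-space norm-control afforded by $\Phi\in W_{0}(L^{1,1})^{(r)}$ with compact support, which makes the oscillation of $f$ uniformly small in the amalgam norm as $\gamma,a\to0$, followed by the norm equivalence of Proposition \ref{thm:stableup}.

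Once this estimate is in hand, I would choose $\gamma_{0}$ and $a_{0}$ so small that $\|P\|_{L^{p,q}\to L^{p,q}}\bigl(\omega(\gamma_{0})+\eta(a_{0})\bigr)<1$. Since $f\in V_{p,q}(\Phi)$ gives $Pf=f$, we have $f-PA_{X}f=P(f-A_{X}f)$, whence
\[
\|f-PA_{X}f\|_{L^{p,q}}\leq \|P\|\,\bigl(\omega(\gamma)+\eta(a)\bigr)\|f\|_{L^{p,q}}=:\alpha\|f\|_{L^{p,q}},
\]
with $\alpha<1$. The iteration $f_{n+1}=PA_{X}(f-f_{n})+f_{n}$ then satisfies $f-f_{n+1}=(I-PA_{X})(f-f_{n})$ on $V_{p,q}(\Phi)$, so $\|f-f_{n}\|_{L^{p,q}}\leq\alpha^{n}\|f-f_{0}\|$ with $f_{0}=0$, giving geometric convergence with $C=\|f\|_{L^{p,q}}$. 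The main obstacle I anticipate is the second paragraph: converting the pointwise oscillation bounds into a genuine $L^{p,q}$ operator-norm estimate with constants depending only on $\Phi$ and $M$ (and not on the particular sampling points), since one must control the oscillation of $f$ uniformly across all cells simultaneously and exploit the finite overlap of the cover $\{B_{\gamma}(x_{j},y_{k})\}$ to avoid losing summability in the mixed-norm. This is where the compact support of $\Phi$ and the Wiener-amalgam machinery are essential, and it is the step most likely to require careful bookkeeping.
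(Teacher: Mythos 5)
Your proposal follows essentially the same route as the paper: the pointwise split of $f-\langle f,\psi_{x_{j},y_{k}}\rangle$ into $\bigl(f-f(x_{j},y_{k})\bigr)+\bigl(f(x_{j},y_{k})-\langle f,\psi_{x_{j},y_{k}}\rangle\bigr)$ under the partition of unity is exactly the paper's decomposition $\|f-Q_{X}f\|+\|Q_{X}f-A_{X}f\|$ via the quasi-interpolant, the two pieces are controlled by $\mathrm{osc}_{\gamma}(\phi_{i})$ and $M\,\mathrm{osc}_{a}(\phi_{i})$ in $W(L^{1,1})$ exactly as in the paper (using Propositions \ref{pro:stableup2}, \ref{pro:in Wiener space}, \ref{pro:oscillation} and the norm equivalence of Proposition \ref{thm:stableup}), and the conclusion by making $I-PA_{X}$ a contraction and iterating is identical. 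The proposal is correct.
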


\section{Useful propositions and lemmas}

\ \ \ \
In this section, we introduce some useful propositions and lemmas.

Let $f$ be a continuous function. We define the oscillation (or modulus of continuity) of $f$
by $\hbox{osc}_{\delta}(f)(x_{1},x_{2})=\sup_{|y_{1}|\leq\delta,|y_{2}|\leq\delta}|f(x_{1}+y_{1},x_{2}+y_{2})-f(x_{1},x_{2})|$. 

The following tow propositions are needed in the proof of two lemmas in this section.

\begin{proposition}\cite[Lemma 3.4]{zhangqingyue}\label{pro:in Wiener space}
If $\phi\in W_{0}(L^{1,1})$ whose support is compact, then there exists $\delta_{0}>0$ such that
 $\hbox{osc}_{\delta}(\phi)\in W_{0}(L^{1,1})$ for any $\delta\leq\delta_{0}$.
\end{proposition}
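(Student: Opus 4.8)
The plan is to verify directly that $\hbox{osc}_{\delta}(\phi)$ meets the two defining requirements of $W_{0}(L^{1,1})$: continuity, and finiteness of the $W(L^{1,1})$-norm. The whole argument is driven by the fact that $\phi$, being continuous with compact support, is both bounded and uniformly continuous. The key structural observation I would isolate first is that a \emph{bounded} continuous function with \emph{compact} support automatically lies in $W_{0}(L^{1,1})$: if such a $g$ is supported in $[-N,N]\times[-N,N]^{d}$, then in
\[
\|g\|_{W(L^{1,1})}=\sum_{n\in\bbZ}\sup_{x\in[0,1]}\sum_{l\in\bbZ^{d}}\sup_{y\in[0,1]^{d}}|g(x+n,y+l)|
\]
only the finitely many indices $n\in\{-N-1,\dots,N\}$ and $l\in\{-N-1,\dots,N\}^{d}$ produce nonzero summands, each bounded by $\|g\|_{\infty}$; since there are finitely many of them, the norm is finite. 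Thus the whole task reduces to showing that, for $\delta$ small, $\hbox{osc}_{\delta}(\phi)$ is continuous, bounded and compactly supported.

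Boundedness and compact support are immediate. From the definition one has $\hbox{osc}_{\delta}(\phi)(x_{1},x_{2})\leq 2\|\phi\|_{\infty}$ pointwise, so the function is bounded. For the support, fix $\delta_{0}=1$ (any fixed positive number would do) and take $\delta\leq\delta_{0}$: if $(x_{1},x_{2})$ together with every point obtained by shifting it within the box $\{(y_{1},y_{2}):|y_{1}|\leq\delta,\ |y_{2}|\leq\delta\}$ lies outside $\mathrm{supp}\,\phi$, then every term in the supremum defining $\hbox{osc}_{\delta}(\phi)(x_{1},x_{2})$ vanishes. Hence $\mathrm{supp}\,\hbox{osc}_{\delta}(\phi)$ is contained in the closed $\delta$-neighborhood of $\mathrm{supp}\,\phi$, which for $\delta\leq 1$ sits inside a fixed compact set. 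Combined with the observation of the first paragraph, this yields $\hbox{osc}_{\delta}(\phi)\in W(L^{1,1})$ as soon as continuity is established.

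The substantive step, and the only one needing care, is the continuity of $\hbox{osc}_{\delta}(\phi)$. Here I would exploit the uniform continuity of $\phi$: given $\varepsilon>0$, choose $\eta>0$ so that $|u-v|<\eta$ implies $|\phi(u)-\phi(v)|<\varepsilon/2$. For points $x=(x_{1},x_{2})$, $x'=(x_{1}',x_{2}')$ with $|x-x'|<\eta$ and any shift $z=(y_{1},y_{2})$ in the $\delta$-box, the triangle inequality gives
\[
|\phi(x+z)-\phi(x)|\leq|\phi(x'+z)-\phi(x')|+|\phi(x+z)-\phi(x'+z)|+|\phi(x)-\phi(x')|,
\]
where the last two terms are each $<\varepsilon/2$ since the corresponding arguments differ exactly by $x-x'$. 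Taking the supremum over $z$ in the $\delta$-box gives $\hbox{osc}_{\delta}(\phi)(x)\leq\hbox{osc}_{\delta}(\phi)(x')+\varepsilon$, and by symmetry $|\hbox{osc}_{\delta}(\phi)(x)-\hbox{osc}_{\delta}(\phi)(x')|\leq\varepsilon$. Thus $\hbox{osc}_{\delta}(\phi)$ is (uniformly) continuous. Conceptually, the family $\{\phi(\cdot+z)-\phi(\cdot)\}$ indexed by $z$ in the $\delta$-box is equicontinuous, and a pointwise supremum of an equicontinuous family is continuous.

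Assembling the three ingredients, for every $\delta\leq\delta_{0}=1$ the function $\hbox{osc}_{\delta}(\phi)$ is continuous, bounded and compactly supported, hence belongs to $W_{0}(L^{1,1})$, which is the claim. I expect the continuity step to be the genuine obstacle: boundedness, compact support, and the resulting finiteness of the $W(L^{1,1})$-norm all follow routinely from $\phi$ having compact support, whereas the equicontinuity argument is what promotes the pointwise supremum to an honest continuous function.
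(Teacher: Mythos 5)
Your proof is correct. One point of comparison is worth making explicit: this paper does not prove the proposition at all --- it is imported as a citation to Lemma 3.4 of the preprint \cite{zhangqingyue}, so there is no in-paper argument to measure your proposal against. Your self-contained route is the natural one for such oscillation lemmas: reduce membership in $W_{0}(L^{1,1})$ to showing that $\hbox{osc}_{\delta}(\phi)$ is continuous, bounded, and compactly supported, where the only nontrivial ingredient is continuity of the pointwise supremum, which you correctly obtain from the uniform continuity of $\phi$ (the equicontinuity of the family $\{\phi(\cdot+z)-\phi(\cdot)\}$ over the $\delta$-box). All three steps check out, including the observation that the support of $\hbox{osc}_{\delta}(\phi)$ sits in the $\delta$-neighborhood of $\mathrm{supp}\,\phi$ rather than in $\mathrm{supp}\,\phi$ itself; your proof in fact shows the conclusion for \emph{every} $\delta>0$, which is stronger than the stated ``there exists $\delta_{0}$'' formulation, so fixing $\delta_{0}=1$ is harmless.
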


\begin{proposition}\cite[Lemma 3.5]{zhangqingyue}\label{pro:oscillation}
If $\phi\in W_{0}(L^{1,1})$ whose support is compact, then $$\lim_{\delta\rightarrow0}\|\hbox{osc}_{\delta}(\phi)\|_{W(L^{1,1})}=0.$$
\end{proposition}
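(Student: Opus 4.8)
The plan is to use the compact support of $\phi$ to collapse the infinite Wiener-amalgam sum defining $\|\hbox{osc}_{\delta}(\phi)\|_{W(L^{1,1})}$ to a \emph{finite}, $\delta$-independent collection of lattice cells, and then to kill each surviving cell by uniform continuity. First I would record that, since $\phi$ is continuous with compact support on $\bbR^{d+1}$, it is uniformly continuous; hence $\sup_{(x_{1},x_{2})\in\bbR^{d+1}}\hbox{osc}_{\delta}(\phi)(x_{1},x_{2})\to0$ as $\delta\to0$. Equivalently, for every $\varepsilon>0$ there is $\delta_{1}>0$ such that $\hbox{osc}_{\delta}(\phi)(x_{1},x_{2})\leq\varepsilon$ for all $(x_{1},x_{2})$ whenever $\delta\leq\delta_{1}$.

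Next I would localise the support of the oscillation. Writing $K=\mathrm{supp}\,\phi$, a point $(x_{1},x_{2})$ with $\hbox{osc}_{\delta}(\phi)(x_{1},x_{2})\neq0$ must satisfy either $\phi(x_{1},x_{2})\neq0$ or $\phi(x_{1}+y_{1},x_{2}+y_{2})\neq0$ for some $|y_{1}|\leq\delta$, $|y_{2}|\leq\delta$; in either case $(x_{1},x_{2})$ lies in the $\delta$-neighbourhood $K_{\delta}$ of $K$. For all $\delta\leq1$ one has $K_{\delta}\subseteq K_{1}$, a single compact set depending only on $K$. Therefore only those unit cells $[0,1]\times[0,1]^{d}+(n,l)$ that meet $K_{1}$ can contribute to the Wiener-amalgam sum, and there are only finitely many such cells, say $N=N(K)$ of them, uniformly in $\delta\leq1$.

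Combining the two steps, fix $\varepsilon>0$ and take $\delta\leq\min\{\delta_{1},1\}$. In the expression $\|\hbox{osc}_{\delta}(\phi)\|_{W(L^{1,1})}=\sum_{n}\sup_{x}\sum_{l}\sup_{y}\hbox{osc}_{\delta}(\phi)(x+n,y+l)$ every cell not meeting $K_{1}$ contributes $0$, while each of the at most $N$ surviving cells contributes at most $\sup_{(x_{1},x_{2})}\hbox{osc}_{\delta}(\phi)(x_{1},x_{2})\leq\varepsilon$; hence $\|\hbox{osc}_{\delta}(\phi)\|_{W(L^{1,1})}\leq N\varepsilon$. Letting $\varepsilon\to0$ (equivalently $\delta\to0$) gives the claim. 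Note that Proposition \ref{pro:in Wiener space} already guarantees $\hbox{osc}_{\delta}(\phi)\in W_{0}(L^{1,1})$ for small $\delta$, so the norm is finite throughout and the manipulation is legitimate.

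Since the two ingredients---uniform continuity and finiteness of the contributing cells---are both elementary consequences of continuity together with compact support, there is no serious analytic obstacle here. The only point requiring genuine care is the \emph{uniformity in $\delta$} of the cell count $N$: it is essential that $K_{\delta}\subseteq K_{1}$ for all $\delta\leq1$, so that one fixed finite collection of cells bounds the sum for every small $\delta$. This is exactly what allows the per-cell estimate $\varepsilon$ to be multiplied by a constant $N$ rather than a $\delta$-dependent quantity, and hence what makes the limit vanish.
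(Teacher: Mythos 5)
Your proof is correct. Note that this paper does not contain its own proof of Proposition \ref{pro:oscillation}: it is quoted from \cite[Lemma 3.5]{zhangqingyue}, so there is no internal argument to compare against. Your route---uniform continuity of the compactly supported $\phi$ giving $\sup_{(x_{1},x_{2})}\hbox{osc}_{\delta}(\phi)(x_{1},x_{2})\to 0$, together with the observation that for $\delta\leq 1$ the oscillation is supported in one fixed compact set and hence only a fixed finite number $N$ of unit cells contribute (using $\sup$ of a sum $\leq$ sum of $\sup$'s to reduce the nested $W(L^{1,1})$ expression to a sum of per-cell suprema)---is exactly the standard argument for such oscillation lemmas, and the uniformity of $N$ in $\delta$ that you flag is indeed the one point that needs care.
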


To prove our main results, we need the following Lemma.

\begin{lemma}\label{lem:average function}
Let $\psi\in L^{1}(\bbR^{d+1})$ satisfying $\int_{\bbR^{d+1}}\psi(x)dx=1$ and $\psi_{a}=(1/a^{d+1})\psi(\cdot/a)$. Then 
for every $\phi\in W_{0}(L^{1,1})$ whose support is compact,
\[
\|\phi-\phi*\psi^{*}_{a}\|_{W(L^{1,1})}\rightarrow0\quad \mbox{as}\quad a\rightarrow0^{+}.
\]
Here $a$ is any positive real number.
\end{lemma}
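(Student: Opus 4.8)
\,\,
The plan is to reduce the claim to the convolution estimate in Lemma \ref{convolution relation} together with the approximation-of-identity behaviour of $\{\psi_a^*\}$ as $a\to0^+$. The essential point is to write the error $\phi-\phi*\psi_a^*$ as a single convolution against $\phi$ of a suitable kernel, and then to exploit the fact that $\int_{\bbR^{d+1}}\psi_a^*=1$ so that this error can be expressed in terms of oscillation of $\phi$, whose Wiener-norm smallness is guaranteed by Proposition \ref{pro:oscillation}.

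First I would record that $\psi_a^*$ is again an integrable function with $\int_{\bbR^{d+1}}\psi_a^*=1$ and $\|\psi_a^*\|_{L^1}=\|\psi\|_{L^1}$ for every $a>0$, by the change of variables $x\mapsto ax$ built into $\psi_a=(1/a^{d+1})\psi(\cdot/a)$; note the mass of $\psi_a^*$ concentrates near the origin as $a\to0^+$. Using $\int\psi_a^*=1$, I would write, for almost every $(x_1,x_2)$,
\[
\bigl(\phi-\phi*\psi_a^*\bigr)(x_1,x_2)=\int_{\bbR^{d+1}}\bigl[\phi(x_1,x_2)-\phi(x_1-t_1,x_2-t_2)\bigr]\,\psi_a^*(t_1,t_2)\,dt.
\]
By definition of the oscillation, the bracketed term is bounded in modulus by $\hbox{osc}_{\delta}(\phi)(x_1,x_2)$ whenever $(t_1,t_2)$ lies in the ball or cube of radius $\delta$ about the origin. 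The strategy is therefore to split the integral into the region $|t|\leq\delta$, where the integrand is controlled by $\hbox{osc}_\delta(\phi)(x_1,x_2)\,|\psi_a^*(t)|$, and the complementary region $|t|>\delta$, where the mass $\int_{|t|>\delta}|\psi_a^*|$ is small for $a$ small because of the concentration of $\psi_a^*$ at the origin.

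Next I would take the $W(L^{1,1})$ norm of each piece. For the near-origin piece, pulling the $W(L^{1,1})$ norm through the $t$-integral (as in the proof of Lemma \ref{convolution relation}) bounds it by $\|\hbox{osc}_\delta(\phi)\|_{W(L^{1,1})}\cdot\int_{|t|\leq\delta}|\psi_a^*(t)|\,dt\leq\|\hbox{osc}_\delta(\phi)\|_{W(L^{1,1})}\,\|\psi\|_{L^1}$; by Proposition \ref{pro:oscillation} this is small once $\delta$ is small, uniformly in $a$. For the far piece, I would treat it as a convolution of $\phi$ against the truncated kernel $\psi_a^*\,\mathbf{1}_{|t|>\delta}$, and apply Lemma \ref{convolution relation} to bound its $W(L^{1,1})$ norm by $\|\phi\|_{W(L^{1,1})}\int_{|t|>\delta}|\psi_a^*(t)|\,dt$. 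The argument then proceeds by a standard two-parameter limit: given $\varepsilon>0$, first fix $\delta\leq\delta_0$ (with $\delta_0$ from Proposition \ref{pro:in Wiener space}) so small that $\|\hbox{osc}_\delta(\phi)\|_{W(L^{1,1})}\,\|\psi\|_{L^1}<\varepsilon/2$, and then choose $a$ small enough that $\int_{|t|>\delta}|\psi_a^*|<\varepsilon/(2\|\phi\|_{W(L^{1,1})})$, giving total error below $\varepsilon$.

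The main obstacle I anticipate is justifying that the far-field mass $\int_{|t|>\delta}|\psi_a^*(t)|\,dt\to0$ as $a\to0^+$; this is the only place where genuine properties of $\psi\in L^1$ enter, and it follows from the substitution $t=as$, which turns the integral into $\int_{|s|>\delta/a}|\psi(s)|\,ds$, a tail of the fixed integrable function $\psi$ over a region escaping to infinity as $a\to0^+$, hence vanishing by dominated convergence. A secondary technical care is that Lemma \ref{convolution relation} requires $\phi\in W(L^{1,1})$ and a genuinely $L^1$ kernel, which is satisfied here since $\psi_a^*\,\mathbf{1}_{|t|>\delta}\in L^1(\bbR^{d+1})$ and $\phi\in W_0(L^{1,1})$; I would also confirm that the hypothesis $\phi\in W_0(L^{1,1})$ with compact support, via Proposition \ref{pro:in Wiener space}, legitimises the use of $\hbox{osc}_\delta(\phi)$ as an element of $W(L^{1,1})$ in the first place.
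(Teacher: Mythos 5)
Your proposal is correct and follows essentially the same route as the paper: write the error as an integral of $\phi(x)-\phi(x-t)$ against $\psi^{*}_{a}(t)$, control the near-origin region by $\|\hbox{osc}_{\delta}(\phi)\|_{W(L^{1,1})}\|\psi\|_{L^{1}}$ via Propositions \ref{pro:in Wiener space} and \ref{pro:oscillation}, and kill the far region by the vanishing tail $\int_{|s|>\delta/a}|\psi(s)|\,ds$ after the substitution $t=as$. The only difference is organizational: the paper splits twice (first at $\delta_{0}$ into $I_{1},I_{2},I_{3}$, then again at $\delta_{1}$ inside the near region into $I_{4},I_{5},I_{6}$), whereas your single split at a small $\delta\leq\delta_{0}$ chosen from Proposition \ref{pro:oscillation} reaches the same estimates more economically (modulo an immaterial factor of $2$ on the far-field term, which contains both $\phi(x)\int_{|t|>\delta}\psi^{*}_{a}$ and the truncated convolution).
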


\begin{proof}
Since $\int_{\bbR^{d+1}}\psi(x)dx=1$ and $\psi^{*}_{a}(x)=\overline{\psi_{a}(-x)}$, one has
\begin{eqnarray*}
\phi-\phi*\psi^{*}_{a}=\int_{\bbR^{d+1}}(\phi(x)-\phi(x+t))\overline{\psi_{a}(t)}dt.
\end{eqnarray*}
By Proposition \ref{pro:in Wiener space}, there exists $\delta_{0}>0$ such that
 $\hbox{osc}_{\delta}(\phi)\in W_{0}(L^{1,1})$ for any $\delta\leq\delta_{0}$.
Thus
\begin{eqnarray*}
&&\|\phi-\phi*\psi^{*}_{a}\|_{W(L^{1,1})}\quad\quad\quad\quad\quad\quad\quad\quad\quad\quad\quad\quad\quad\\
&&\quad=\sum_{k_{1}\in\bbZ}\sup_{x_{1}\in[0,1]}\sum_{k_{2}\in\bbZ^{d}}\sup_{x_{2}\in[0,1]^{d}}\\
&&\quad\quad\left|\int_{\bbR}\int_{\bbR^{d}}(\phi(x_{1}+k_{1},x_{2}+k_{2})-\phi(x_{1}+k_{1}+t_{1},x_{2}+k_{2}+t_{2}))
\overline{\psi_{a}(t_{1},t_{2})}dt_{2}dt_{1}\right|\\
&&\quad\leq\sum_{k_{1}\in\bbZ}\sup_{x_{1}\in[0,1]}\sum_{k_{2}\in\bbZ^{d}}\sup_{x_{2}\in[0,1]^{d}}\\
&&\quad\quad\int_{\bbR}\int_{\bbR^{d}}\left|\phi(x_{1}+k_{1},x_{2}+k_{2})-\phi(x_{1}+k_{1}+t_{1},x_{2}+k_{2}+t_{2})\right|
\left|\psi_{a}(t_{1},t_{2})\right|dt_{2}dt_{1}\\
&&\quad\leq\sum_{k_{1}\in\bbZ}\sup_{x_{1}\in[0,1]}\sum_{k_{2}\in\bbZ^{d}}\sup_{x_{2}\in[0,1]^{d}}\\
&&\quad\quad\int\limits_{|t_{1}|\leq\delta_{0}}\int\limits_{|t_{2}|\leq\delta_{0}}\left|\phi(x_{1}+k_{1},x_{2}+k_{2})-\phi(x_{1}+k_{1}+t_{1},x_{2}+k_{2}+t_{2})\right|
\left|\psi_{a}(t_{1},t_{2})\right|dt_{2}dt_{1}\\
&&\quad\quad+\sum_{k_{1}\in\bbZ}\sup_{x_{1}\in[0,1]}\sum_{k_{2}\in\bbZ^{d}}\sup_{x_{2}\in[0,1]^{d}}\\
&&\quad\quad\int\limits_{|t_{1}|>\delta_{0}}\int\limits_{|t_{2}|\leq\delta_{0}}\left|\phi(x_{1}+k_{1},x_{2}+k_{2})-\phi(x_{1}+k_{1}+t_{1},x_{2}+k_{2}+t_{2})\right|
\left|\psi_{a}(t_{1},t_{2})\right|dt_{2}dt_{1}\\
&&\quad\quad+\sum_{k_{1}\in\bbZ}\sup_{x_{1}\in[0,1]}\sum_{k_{2}\in\bbZ^{d}}\sup_{x_{2}\in[0,1]^{d}}\\
&&\quad\quad\int\limits_{\bbR}\int\limits_{|t_{2}|>\delta_{0}}\left|\phi(x_{1}+k_{1},x_{2}+k_{2})-\phi(x_{1}+k_{1}+t_{1},x_{2}+k_{2}+t_{2})\right|
\left|\psi_{a}(t_{1},t_{2})\right|dt_{2}dt_{1}\\
&&\quad=I_{1}+I_{2}+I_{3},
\end{eqnarray*}
where
\begin{eqnarray*}
&&I_{1}=\sum_{k_{1}\in\bbZ}\sup_{x_{1}\in[0,1]}\sum_{k_{2}\in\bbZ^{d}}\sup_{x_{2}\in[0,1]^{d}}\\
&&\quad\quad\int\limits_{|t_{1}|\leq\delta_{0}}\int\limits_{|t_{2}|\leq\delta_{0}}\left|\phi(x_{1}+k_{1},x_{2}+k_{2})-\phi(x_{1}+k_{1}+t_{1},x_{2}+k_{2}+t_{2})\right|
\left|\psi_{a}(t_{1},t_{2})\right|dt_{2}dt_{1},
\end{eqnarray*}
\begin{eqnarray*}
&&I_{2}=\sum_{k_{1}\in\bbZ}\sup_{x_{1}\in[0,1]}\sum_{k_{2}\in\bbZ^{d}}\sup_{x_{2}\in[0,1]^{d}}\\
&&\quad\quad\int\limits_{|t_{1}|>\delta_{0}}\int\limits_{|t_{2}|\leq\delta_{0}}\left|\phi(x_{1}+k_{1},x_{2}+k_{2})-\phi(x_{1}+k_{1}+t_{1},x_{2}+k_{2}+t_{2})\right|
\left|\psi_{a}(t_{1},t_{2})\right|dt_{2}dt_{1}
\end{eqnarray*}
and
\begin{eqnarray*}
&&I_{3}=\sum_{k_{1}\in\bbZ}\sup_{x_{1}\in[0,1]}\sum_{k_{2}\in\bbZ^{d}}\sup_{x_{2}\in[0,1]^{d}}\\
&&\quad\quad\int\limits_{\bbR}\int\limits_{|t_{2}|>\delta_{0}}\left|\phi(x_{1}+k_{1},x_{2}+k_{2})-\phi(x_{1}+k_{1}+t_{1},x_{2}+k_{2}+t_{2})\right|
\left|\psi_{a}(t_{1},t_{2})\right|dt_{2}dt_{1}.
\end{eqnarray*}

First of all, we treat $I_{1}$: let $|t|=\max\{|t_{1}|,|t_{2}|\}$. Then
\begin{eqnarray*}
I_{1}&\leq& \sum_{k_{1}\in\bbZ}\sup_{x_{1}\in[0,1]}\sum_{k_{2}\in\bbZ^{d}}\sup_{x_{2}\in[0,1]^{d}}\int\limits_{|t_{1}|\leq\delta_{0}}\int\limits_{|t_{2}|\leq\delta_{0}}
\hbox{osc}_{|t|}(\phi)(x_{1}+k_{1},x_{2}+k_{2}))\left|\psi_{a}(t_{1},t_{2})\right|dt_{2}dt_{1}\\
&\leq& \int\limits_{|t_{1}|\leq\delta_{0}}\int\limits_{|t_{2}|\leq\delta_{0}}
\sum_{k_{1}\in\bbZ}\sup_{x_{1}\in[0,1]}\sum_{k_{2}\in\bbZ^{d}}\sup_{x_{2}\in[0,1]^{d}}\hbox{osc}_{|t|}(\phi)(x_{1}+k_{1},x_{2}+k_{2}))\left|\psi_{a}(t_{1},t_{2})\right|dt_{2}dt_{1}\\
&=&\int\limits_{|t_{1}|\leq\delta_{0}}\int\limits_{|t_{2}|\leq\delta_{0}}\|\hbox{osc}_{|t|}(\phi)\|_{W(L^{1,1})}\left|\psi_{a}(t_{1},t_{2})\right|dt_{2}dt_{1}.
\end{eqnarray*}
By proposition \ref{pro:oscillation}, for any $\epsilon>0$, there exists $\delta_{1}>0\,(\delta_{1}<\delta_{0})$ such that 
\[
\|\hbox{osc}_{\delta}(\phi)\|_{W(L^{1,1})}<\epsilon, \quad \mbox{for any}\,\delta\leq\delta_{1}.
\]
Write 
\begin{eqnarray*}
&&\int\limits_{|t_{1}|\leq\delta_{0}}\int\limits_{|t_{2}|\leq\delta_{0}}\|\hbox{osc}_{|t|}(\phi)\|_{W(L^{1,1})}\left|\psi_{a}(t_{1},t_{2})\right|dt_{2}dt_{1}\\
&&\quad=\int\limits_{|t_{1}|\leq\delta_{1}}\int\limits_{|t_{2}|\leq\delta_{1}}\|\hbox{osc}_{|t|}(\phi)\|_{W(L^{1,1})}\left|\psi_{a}(t_{1},t_{2})\right|dt_{2}dt_{1}\\
&&\quad\quad+\int\limits_{\delta_{1}<|t_{1}|\leq\delta_{0}}\int\limits_{|t_{2}|\leq\delta_{0}}\|\hbox{osc}_{|t|}(\phi)\|_{W(L^{1,1})}\left|\psi_{a}(t_{1},t_{2})\right|dt_{2}dt_{1}\\
&&\quad \quad\quad +\int\limits_{|t_{1}|\leq\delta_{1}}\int\limits_{\delta_{1}<|t_{2}|\leq\delta_{0}}\|\hbox{osc}_{|t|}(\phi)\|_{W(L^{1,1})}\left|\psi_{a}(t_{1},t_{2})\right|dt_{2}dt_{1}\\
&&\quad=I_{4}+I_{5}+I_{6}.
\end{eqnarray*}
Then 
\begin{eqnarray*}
I_{4}&\leq& \int\limits_{|t_{1}|\leq\delta_{1}}\int\limits_{|t_{2}|\leq\delta_{1}}\|\hbox{osc}_{\delta_{1}}(\phi)\|_{W(L^{1,1})}\left|\psi_{a}(t_{1},t_{2})\right|dt_{2}dt_{1}\\
&\leq& \epsilon\int\limits_{|t_{1}|\leq\delta_{1}}\int\limits_{|t_{2}|\leq\delta_{1}}\left|\psi_{a}(t_{1},t_{2})\right|dt_{2}dt_{1}\leq\epsilon\|\psi\|_{L^{1}},
\end{eqnarray*}
\begin{eqnarray*}
I_{5}&\leq&\int\limits_{\delta_{1}<|t_{1}|\leq\delta_{0}}\int\limits_{|t_{2}|\leq\delta_{0}}\|\hbox{osc}_{\delta_{0}}(\phi)\|_{W(L^{1,1})}\left|\psi_{a}(t_{1},t_{2})\right|dt_{2}dt_{1}\\
&\leq&\|\hbox{osc}_{\delta_{0}}(\phi)\|_{W(L^{1,1})}\int\limits_{\delta_{1}/a<|s_{1}|}\int\limits_{s_{2}\in\bbR^{d}}\left|\psi(s_{1},s_{2})\right|ds_{2}ds_{1}\\
&\rightarrow& 0\quad \mbox{as}\,a\rightarrow0^{+}
\end{eqnarray*}
and
\begin{eqnarray*}
I_{6}&\leq&\int\limits_{|t_{1}|\leq\delta_{1}}\int\limits_{\delta_{1}<|t_{2}|\leq\delta_{0}}\|\hbox{osc}_{|t|}(\phi)\|_{W(L^{1,1})}\left|\psi_{a}(t_{1},t_{2})\right|dt_{2}dt_{1}\\
&\leq&\|\hbox{osc}_{\delta_{0}}(\phi)\|_{W(L^{1,1})}\int\limits_{s_{1}\in\bbR}\int\limits_{\delta_{1}/a<|s_{2}|}\left|\psi(s_{1},s_{2})\right|ds_{2}ds_{1}\\
&\rightarrow& 0\quad \mbox{as}\,a\rightarrow0^{+}.
\end{eqnarray*}

Next, we treat $I_{2}$: 
\begin{eqnarray*}
&&I_{2}\leq\int\limits_{|t_{1}|>\delta_{0}}\int\limits_{|t_{2}|\leq\delta_{0}}\sum_{k_{1}\in\bbZ}\sup_{x_{1}\in[0,1]}\sum_{k_{2}\in\bbZ^{d}}\sup_{x_{2}\in[0,1]^{d}}\left|\phi(x_{1}+k_{1},x_{2}+k_{2})\right|
\left|\psi_{a}(t_{1},t_{2})\right|dt_{2}dt_{1}\\
&&\quad\quad+\int\limits_{|t_{1}|>\delta_{0}}\int\limits_{|t_{2}|\leq\delta_{0}}\\
&&\quad\quad\quad\sum_{k_{1}\in\bbZ}\sup_{x_{1}\in[0,1]}\sum_{k_{2}\in\bbZ^{d}}\sup_{x_{2}\in[0,1]^{d}}\left|\phi(x_{1}+k_{1}+t_{1},x_{2}+k_{2}+t_{2})\right|
\left|\psi_{a}(t_{1},t_{2})\right|dt_{2}dt_{1}\\
&&\quad\leq2\int\limits_{|t_{1}|>\delta_{0}}\int\limits_{|t_{2}|\leq\delta_{0}}\|\phi\|_{W(L^{1,1})}\left|\psi_{a}(t_{1},t_{2})\right|dt_{2}dt_{1}\\
&&\quad\leq2\|\phi\|_{W(L^{1,1})}\int\limits_{|t_{1}|>\delta_{0}}\int\limits_{|t_{2}|\leq\delta_{0}}\left|\psi_{a}(t_{1},t_{2})\right|dt_{2}dt_{1}\\
&&\quad\leq2\|\phi\|_{W(L^{1,1})}\int\limits_{|s_{1}|>\delta_{0}/a}\int\limits_{t_{2}\in\bbR^{d}}\left|\psi(s_{1},s_{2})\right|ds_{2}ds_{1}\\
&&\quad\rightarrow 0\quad \mbox{as}\,a\rightarrow0^{+}.
\end{eqnarray*}
Similarly, we can prove $I_{3}\rightarrow 0\quad \mbox{as}\,a\rightarrow0^{+}$. This completes the proof of Lemma \ref{lem:average function}.
\end{proof}

The following lemma is a generalization of \cite[Lemma 4.1]{Aldroubisun}.

\begin{lemma}\label{Qoperator}
Let $X$ be any sampling set which is $\gamma$-dense in $\bbR^{d+1}$, let $\{\beta_{j,k}:j,k\in\mathbb{J}\}$ be a bounded uniform partition of unity associated with $X$, and let $\phi\in W_{0}(L^{1,1})$ whose support is compact.
 Then there exist 
constants $C$ and $\gamma_{0}$ such that for any $f=\sum_{k\in\bbZ^{d+1}}c_{k}\phi(\cdot-k)$ and $\gamma\leq\gamma_{0}$, one has 
\[
\|Q_{X}f\|_{L^{p,q}}\leq C\|c\|_{\ell^{p,q}}\|\phi\|_{W(L^{1,1})} \quad \mbox{for any}\, c=\{c_{k}:k\in\bbZ^{d+1}\}\in\ell^{p,q}.
\]
\end{lemma}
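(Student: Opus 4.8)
The plan is to reduce the bound on the quasi-interpolant to a single-generator synthesis estimate. Writing $f=\sum_{m\in\bbZ^{d+1}}c_m\phi(\cdot-m_1,\cdot-m_2)$ with $m=(m_1,m_2)$, $m_1\in\bbZ$, $m_2\in\bbZ^d$ (this sum converges uniformly on compact sets since $\phi\in W_{0}(L^{1,1})$ is continuous with compact support and $c\in\ell^{p,q}\subset\ell^{\infty}$, so the point values $f(x_j,y_k)$ are well defined), I would first dominate $Q_Xf$ pointwise by a function of the form $g=\sum_{m}|c_m|\Phi_\gamma(\cdot-m_1,\cdot-m_2)$, where $\Phi_\gamma$ is a ``$\gamma$-enlarged'' majorant of $\phi$, and then bound $\|g\|_{L^{p,q}}$ using the boundedness of the synthesis operator $\ell^{p,q}\to L^{p,q}$ together with the estimate $\|\Phi_\gamma\|_{W(L^{1,1})}\le C\|\phi\|_{W(L^{1,1})}$.

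For the pointwise bound, fix $(x,y)$. Since $\{\beta_{j,k}\}$ is a bounded uniform partition of unity, the value $Q_Xf(x,y)=\sum_{j,k}f(x_j,y_k)\beta_{j,k}(x,y)$ is a convex combination of the sampled values $f(x_j,y_k)$ over those indices with $(x,y)\in\mathrm{supp}\,\beta_{j,k}\subset B_\gamma(x_j,y_k)$; for every such index one has $|x_j-x|<\gamma$ and $|y_k-y|<\gamma$. Setting
\[
\Phi_\gamma(s,t)=\sup_{|u|\le\gamma,\,|v|\le\gamma}|\phi(s+u,t+v)|,\qquad u\in\bbR,\ v\in\bbR^{d},
\]
I would estimate, uniformly in such $(j,k)$,
\[
|f(x_j,y_k)|\le\sum_{m}|c_m|\,|\phi(x_j-m_1,y_k-m_2)|\le\sum_{m}|c_m|\,\Phi_\gamma(x-m_1,y-m_2)=g(x,y).
\]
Because $Q_Xf(x,y)$ is a convex combination of terms each bounded by $g(x,y)$, this yields $|Q_Xf(x,y)|\le g(x,y)$ a.e., hence $\|Q_Xf\|_{L^{p,q}}\le\|g\|_{L^{p,q}}$. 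I note that this step needs no control on the overlap of the balls $B_\gamma(x_j,y_k)$: the convexity built into the partition of unity does all the work.

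It remains to bound $\|g\|_{L^{p,q}}$. First I would verify $\Phi_\gamma\in W(L^{1,1})$ with norm controlled by that of $\phi$: since $\phi$ is continuous with compact support, $\Phi_\gamma$ is bounded with compact support, and for $\gamma\le\gamma_0:=1$ the enlarged sup $\sup_{s_1\in[0,1]}\sup_{|u|\le\gamma}|\phi(s_1+k_1+u,\cdot)|$ is a sup over an interval of length at most $3$, dominated by the sum of the three sups over adjacent unit intervals; applying the same covering in the remaining $d$ variables and reindexing the lattice gives $\|\Phi_\gamma\|_{W(L^{1,1})}\le 3^{d+1}\|\phi\|_{W(L^{1,1})}$. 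Moreover $\Phi_\gamma\le\Phi_{\gamma_0}$ for $\gamma\le\gamma_0$, so this bound is uniform in $\gamma\le\gamma_0$. Finally $g$ is exactly the synthesis of $\{|c_m|\}\in\ell^{p,q}$ against the single generator $\Phi_\gamma$, so by the synthesis inequality in mixed Lebesgue spaces (the upper estimate underlying Proposition \ref{thm:stableup}, applied with $r=1$ to $\Phi_\gamma$) one obtains $\|g\|_{L^{p,q}}\le C_0\|\Phi_\gamma\|_{W(L^{1,1})}\|c\|_{\ell^{p,q}}$. Combining the three estimates gives $\|Q_Xf\|_{L^{p,q}}\le C\|c\|_{\ell^{p,q}}\|\phi\|_{W(L^{1,1})}$ with $C=3^{d+1}C_0$ and $\gamma_0=1$.

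The step I expect to be the main obstacle is the synthesis inequality $\|\sum_m d_m\psi(\cdot-m_1,\cdot-m_2)\|_{L^{p,q}}\le C_0\|\psi\|_{W(L^{1,1})}\|d\|_{\ell^{p,q}}$ with \emph{explicit} dependence on the Wiener norm. Unlike the scalar $L^{p}$ case of \cite[Lemma 4.1]{Aldroubisun}, the mixed norm forces one to treat the two groups of variables with the different exponents $p$ and $q$ separately, applying Minkowski and H\"older cube by cube in the $y$-variable and a discrete Young inequality $\ell^{p,q}*\ell^{1,1}\hookrightarrow\ell^{p,q}$ on the lattice. Extracting the constant as a clean multiple of $\|\psi\|_{W(L^{1,1})}$, rather than the implicit $D_1^{-1}$ furnished by Proposition \ref{thm:stableup}, is the one place requiring genuine care; the covering and convexity arguments in the remaining steps are routine.
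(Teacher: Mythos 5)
Your proof is correct, and at bottom it follows the same strategy as the paper's: both arguments reduce the claim to the single-generator synthesis estimate $\|\sum_{k}d_{k}\psi(\cdot-k)\|_{L^{p,q}}\le\|d\|_{\ell^{p,q}}\|\psi\|_{W(L^{1,1})}$ applied to a $\gamma$-oscillation-type majorant of $\phi$. Note that the ``main obstacle'' you flag at the end is not an obstacle at all: that synthesis inequality, with the clean constant $C_{0}=1$, is exactly Proposition~\ref{pro:stableup2} (quoted from \cite{LiLiu}), which the paper states immediately before this lemma precisely for this purpose. Where you genuinely diverge is in how the majorant is produced and controlled. The paper splits $Q_{X}f=f+(Q_{X}f-f)$, bounds $|Q_{X}f-f|$ pointwise by $\sum_{m}|c_{m}|\,\hbox{osc}_{\gamma}(\phi)(\cdot-m)$, and then imports the bound $\|\hbox{osc}_{\gamma}(\phi)\|_{W(L^{1,1})}\le C'\|\phi\|_{W(L^{1,1})}$ from the proof of \cite[Lemma 3.4]{zhangqingyue}; you skip the triangle-inequality detour, dominate $Q_{X}f$ directly by the synthesis of $|c|$ against the enlarged sup $\Phi_{\gamma}$, and prove the Wiener-norm comparison $\|\Phi_{\gamma}\|_{W(L^{1,1})}\le3^{d+1}\|\phi\|_{W(L^{1,1})}$ yourself by covering the stretched window with $3^{d+1}$ unit cubes. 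Since $|\phi|\le\Phi_{\gamma}\le|\phi|+\hbox{osc}_{\gamma}(\phi)$ and $\hbox{osc}_{\gamma}(\phi)\le2\Phi_{\gamma}$, the two majorants are equivalent and the arguments are interchangeable; what yours buys is self-containedness and an explicit constant ($C=3^{d+1}$, $\gamma_{0}=1$) in place of the implicit $C'$ taken from an external proof.
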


To prove the Lemma \ref{Qoperator}, we introduce the following proposition.

\begin{proposition}\cite[Theorem 3.1]{LiLiu}\label{pro:stableup2}
Assume that $1\leq p,q<\infty $ and $\phi\in W(L^{1,1})$.
Then for any $c\in \ell^{p,q}$, the function $f=\sum_{k_{1}\in \Bbb Z}\sum_{k_{2}\in \Bbb Z^d}c(k_{1},k_{2})\phi(\cdot-k_{1},\cdot-k_{2})$
 belongs to $L^{p,q}$ and
$$
\|f\|_{L^{p,q}}\leq\|c\|_{\ell^{p,q}}\left \| \phi \right \|_{W(L^{1,1})}.
$$
\end{proposition}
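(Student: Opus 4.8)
The plan is to reduce the synthesis estimate to a Young-type inequality for discrete convolution on the mixed sequence space $\ell^{p,q}$, via a pointwise majorization of $f$ on each unit cube by a semidiscrete convolution of $|c|$ against the Wiener-amalgam sampling sequence of $\phi$. First I would introduce the nonnegative sequence $b(m_{1},m_{2})=\sup_{x\in[0,1]}\sup_{y\in[0,1]^{d}}|\phi(m_{1}+x,m_{2}+y)|$ indexed by $(m_{1},m_{2})\in\bbZ\times\bbZ^{d}$. By the very definition of the mixed Wiener amalgam norm, $\sum_{m_{1}\in\bbZ}\sum_{m_{2}\in\bbZ^{d}}b(m_{1},m_{2})=\|\phi\|_{W(L^{1,1})}$; that is, $b\in\ell^{1,1}$ with $\|b\|_{\ell^{1,1}}=\|\phi\|_{W(L^{1,1})}$.

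The key step is a pointwise bound. Fixing $(x,y)$, I write $x=n+x'$ and $y=l+y'$ with $n\in\bbZ$, $l\in\bbZ^{d}$, $x'\in[0,1)$, $y'\in[0,1)^{d}$, reindex the defining double sum by $m_{1}=n-k_{1}$, $m_{2}=l-k_{2}$, and bound each translate by $|\phi(m_{1}+x',m_{2}+y')|\le b(m_{1},m_{2})$. This gives, for every $(x,y)$ in the unit cube $Q_{n,l}=[n,n+1)\times\prod_{i}[l_{i},l_{i}+1)$,
\[
|f(x,y)|\le\sum_{m_{1}\in\bbZ}\sum_{m_{2}\in\bbZ^{d}}|c(n-m_{1},l-m_{2})|\,b(m_{1},m_{2})=:G(n,l),
\]
the right-hand side being the discrete convolution $(|c|\ast b)(n,l)$. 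Since $c\in\ell^{p,q}\subset\ell^{\infty}$ and $b\in\ell^{1}$, this quantity is finite at every $(n,l)$, which simultaneously legitimizes the a.e.\ absolute convergence of the series defining $f$ and shows that $G(n,l)$ is constant on each cube $Q_{n,l}$.

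Then I would integrate cube by cube. Because each $Q_{n,l}$ has unit measure and $|f|\le G(n,l)$ there, estimating the inner $L^{q}$-integral in $y$ for $x\in[n,n+1)$ and then the outer $L^{p}$-integral in $x$ yields $\|f\|_{L^{p,q}}^{p}\le\sum_{n}\big(\sum_{l}G(n,l)^{q}\big)^{p/q}=\|G\|_{\ell^{p,q}}^{p}$, hence $\|f\|_{L^{p,q}}\le\|G\|_{\ell^{p,q}}$. It remains to prove the Young inequality $\|G\|_{\ell^{p,q}}=\||c|\ast b\|_{\ell^{p,q}}\le\|c\|_{\ell^{p,q}}\|b\|_{\ell^{1,1}}$. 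Viewing $G=\sum_{m_{1},m_{2}}b(m_{1},m_{2})\,T_{m_{1},m_{2}}|c|$ as a superposition of translates, where $T_{m_{1},m_{2}}$ shifts the index, I use that translation is an isometry of $\ell^{p,q}$ (the double sum defining the norm is shift-invariant) together with the triangle inequality for $\|\cdot\|_{\ell^{p,q}}$, valid since $p,q\ge1$. Chaining the three bounds gives $\|f\|_{L^{p,q}}\le\|c\|_{\ell^{p,q}}\|\phi\|_{W(L^{1,1})}$, which in particular shows $f\in L^{p,q}$.

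The step I expect to be the main obstacle is the Young/Minkowski inequality, where the triangle inequality must be applied to an \emph{infinite} superposition of translates and interchanged with the sum. The clean route is to apply the generalized Minkowski inequality to finite partial sums $\sum_{|m_{1}|\le N,\,|m_{2}|\le N}b(m_{1},m_{2})\,T_{m_{1},m_{2}}|c|$ and let $N\to\infty$; since every sequence involved is nonnegative, monotone convergence justifies passing to the limit and yields $\|G\|_{\ell^{p,q}}\le\big(\sum_{m_{1},m_{2}}b(m_{1},m_{2})\big)\|c\|_{\ell^{p,q}}=\|b\|_{\ell^{1,1}}\|c\|_{\ell^{p,q}}$. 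Everything else is a direct computation once the pointwise cube bound is in place.
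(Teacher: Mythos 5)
There is a genuine gap, and it sits at the very first step of your reduction: the identity $\|b\|_{\ell^{1,1}}=\|\phi\|_{W(L^{1,1})}$ is false. In the paper's definition of the mixed amalgam norm the supremum over the first variable sits \emph{outside} the sum over the second lattice variable,
\[
\|\phi\|_{W(L^{1,1})}=\sum_{m_{1}\in\bbZ}\,\sup_{x\in[0,1]}\sum_{m_{2}\in\bbZ^{d}}\,\sup_{y\in[0,1]^{d}}|\phi(x+m_{1},y+m_{2})|,
\]
whereas your sequence $b$ pulls that supremum inside, so that
\[
\|b\|_{\ell^{1,1}}=\sum_{m_{1}\in\bbZ}\sum_{m_{2}\in\bbZ^{d}}\,\sup_{x\in[0,1]}\,\sup_{y\in[0,1]^{d}}|\phi(x+m_{1},y+m_{2})|
\]
is the norm of the \emph{unmixed} amalgam space $W(L^{1})$ from Section 2 of the paper. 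Only $\|\phi\|_{W(L^{1,1})}\leq\|b\|_{\ell^{1,1}}$ holds, and the gap can be infinite: with $d=1$ take $\phi=\sum_{l\geq0}(l+1)^{-1}\mathbf{1}_{(2^{-l-1},2^{-l}]}(x)\,\mathbf{1}_{[l,l+1)}(y)$. For each fixed $x$ at most one $l$ contributes, so (up to boundary overlaps costing a factor at most $2$ in each variable) $\|\phi\|_{W(L^{1,1})}\leq4<\infty$, while $b(0,l)=(l+1)^{-1}$ gives $\|b\|_{\ell^{1,1}}=\infty$. Thus the inclusion $W(L^{1})\subset W(L^{1,1})$ noted in the paper is strict, and your argument — whose remaining steps (cube-wise pointwise bound, cube-by-cube integration, and the translation-invariance/Minkowski proof of discrete Young on $\ell^{p,q}$) are all sound — proves only the weaker estimate $\|f\|_{L^{p,q}}\leq\|c\|_{\ell^{p,q}}\|\phi\|_{W(L^{1})}$. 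Under the stated hypothesis $\phi\in W(L^{1,1})$ it collapses entirely, since $b$ need not be summable and your majorant $G=|c|*b$ may then be identically $+\infty$.

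The repair is to respect the mixed structure by iterating one variable at a time, which is how the proof of the cited result \cite[Theorem 3.1]{LiLiu} proceeds (the paper itself does not reprove the proposition, it only quotes it). Fix $x$ and view $y\mapsto\sum_{k_{2}}|c(k_{1},k_{2})|\,|\phi(x-k_{1},y-k_{2})|$ as a one-variable semidiscrete convolution; the one-dimensional amalgam Young inequality in $y$ alone gives
\[
\Bigl(\int_{\bbR^{d}}|f(x,y)|^{q}dy\Bigr)^{1/q}\leq\sum_{k_{1}\in\bbZ}\|c(k_{1},\cdot)\|_{\ell^{q}}\,h(x-k_{1}),
\qquad h(u):=\sum_{l\in\bbZ^{d}}\sup_{y\in[0,1]^{d}}|\phi(u,y+l)|.
\]
Then apply the same one-variable argument in $x$ to the sequence $a(k_{1})=\|c(k_{1},\cdot)\|_{\ell^{q}}$ and the function $h$: since $\|a\|_{\ell^{p}}=\|c\|_{\ell^{p,q}}$ and $\sum_{k_{1}}\sup_{x\in[0,1]}h(x+k_{1})$ is \emph{exactly} $\|\phi\|_{W(L^{1,1})}$ (the supremum now correctly placed outside the inner sum), this yields $\|f\|_{L^{p,q}}\leq\|c\|_{\ell^{p,q}}\|\phi\|_{W(L^{1,1})}$ with the claimed constant.
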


\begin{proof}[\textbf{Proof of Lemma \ref{Qoperator}}]
By Proposition \ref{pro:stableup2} and Proposition \ref{pro:in Wiener space}, one has that there exists $\gamma_{0}>0$ such that for any $\gamma\leq\gamma_{0}$
\begin{eqnarray*}
\|f-Q_{X}f\|_{L^{p,q}}\leq\|\hbox{osc}_{\gamma}(f)\|_{L^{p,q}}\leq\|c\|_{\ell^{p,q}}\|\hbox{osc}_{\gamma}(\phi)\|_{W(L^{1,1})}.
\end{eqnarray*}
Using Proposition \ref{pro:stableup2} and the proof of \cite[Lemma 3.4]{zhangqingyue}, one obtains that there exists $C'$ such that
\begin{eqnarray*}
\|Q_{X}f\|_{L^{p,q}}&=&\|f-Q_{X}f+f\|_{L^{p,q}}\leq\|f-Q_{X}f\|_{L^{p,q}}+\|f\|_{L^{p,q}}\\
&\leq&\|c\|_{\ell^{p,q}}\|\hbox{osc}_{\gamma}(\phi)\|_{W(L^{1,1})}+\|c\|_{\ell^{p,q}}\left \| \phi \right \|_{W(L^{1,1})}\\
&\leq&\|c\|_{\ell^{p,q}}C'\|\phi\|_{W(L^{1,1})}+\|c\|_{\ell^{p,q}}\left \| \phi \right \|_{W(L^{1,1})}\\
&\leq&C\|c\|_{\ell^{p,q}}\left \| \phi \right \|_{W(L^{1,1})},
\end{eqnarray*}
where $C=1+C'$.
\end{proof}

The following lemma plays an important role in the proof Theorem \ref{th:suanfa}.

\begin{lemma}\label{lem:co}
Let $\Phi=(\phi_{1},\phi_{2},\cdots,\phi_{r})^{T}\in W_{0}(L^{1,1})^{(r)}$ whose support is compact and $P$ be a bounded projection from $L^{p,q}(\bbR^{d+1})$ onto $V_{p,q}(\Phi)$. Then there exist $\gamma_{0}> 0$ and $a_{0}> 0$ such that for $\gamma$-dense set $X$ with $\gamma\leq\gamma_{0}$ and for every positive real number $a\leq a_{0}$, the operator $I-PA_{X,a}$ is a contraction operator on $V_{p,q}(\Phi)$.
\end{lemma}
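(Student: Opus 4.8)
The plan is to reduce everything to an estimate for $\|f-A_{X,a}f\|_{L^{p,q}}$ and then make that quantity small by choosing the two parameters in order. Since $f\in V_{p,q}(\Phi)$ and $P$ is a projection onto $V_{p,q}(\Phi)$, we have $Pf=f$, so that
\[
(I-PA_{X,a})f=Pf-PA_{X,a}f=P(f-A_{X,a}f),
\]
and hence $\|(I-PA_{X,a})f\|_{L^{p,q}}\le\|P\|\,\|f-A_{X,a}f\|_{L^{p,q}}$. It therefore suffices to exhibit $\gamma_0>0$ and $a_0>0$ such that $\|f-A_{X,a}f\|_{L^{p,q}}\le\frac{1}{2\|P\|}\|f\|_{L^{p,q}}$ for all $f\in V_{p,q}(\Phi)$ whenever $\gamma\le\gamma_0$ and $a\le a_0$; this gives a contraction with constant $\alpha=\tfrac12<1$. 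Here one uses that every $f\in V_{p,q}(\Phi)$ is continuous and that $f*\psi^{*}_{a}$ is continuous (its generators $\phi_i*\psi^{*}_a$ are convolutions of a compactly supported continuous function with an $L^1$ function), so the samples $(f*\psi^{*}_{a})(x_j,y_k)$ and the quasi-interpolant $Q_X(f*\psi^{*}_a)$ are well defined.

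The main step is to exploit the identity $A_{X,a}f=Q_X(f*\psi^{*}_a)$ noted after the definition of $A_{X,a}$, and to split
\[
f-A_{X,a}f=(f-Q_Xf)+Q_X\bigl(f-f*\psi^{*}_a\bigr).
\]
I would bound the two terms separately, fixing $\gamma_0$ first from the first term and then $a_0$ from the second.

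For the first term I would use the oscillation bound $\|f-Q_Xf\|_{L^{p,q}}\le\|\hbox{osc}_{\gamma}(f)\|_{L^{p,q}}$, which holds because $\{\beta_{j,k}\}$ is a partition of unity supported in balls of radius $\gamma$. Writing $f=\sum_{i=1}^{r}\sum_{k}c_i(k)\phi_i(\cdot-k)$ gives the pointwise estimate $\hbox{osc}_{\gamma}(f)\le\sum_i\sum_k|c_i(k)|\,\hbox{osc}_{\gamma}(\phi_i)(\cdot-k)$, so by Proposition \ref{pro:in Wiener space} (which puts $\hbox{osc}_{\gamma}(\phi_i)$ in $W(L^{1,1})$ for $\gamma\le\delta_0$) and Proposition \ref{pro:stableup2},
\[
\|f-Q_Xf\|_{L^{p,q}}\le\sum_{i=1}^{r}\|c_i\|_{\ell^{p,q}}\,\|\hbox{osc}_{\gamma}(\phi_i)\|_{W(L^{1,1})}.
\]
Proposition \ref{thm:stableup} bounds $\sum_i\|c_i\|_{\ell^{p,q}}\le\sqrt{r}\,D_2\|f\|_{L^{p,q}}$, and Proposition \ref{pro:oscillation} gives $\|\hbox{osc}_{\gamma}(\phi_i)\|_{W(L^{1,1})}\to0$ as $\gamma\to0$; choosing $\gamma_0$ small makes this term at most $\frac{1}{4\|P\|}\|f\|_{L^{p,q}}$. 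This fixes $\gamma_0$ and with it the constant $C=C(\gamma_0)$ coming from the quasi-interpolant bound below.

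For the second term I would apply the quasi-interpolant estimate of Lemma \ref{Qoperator} to each generator $\eta_i:=\phi_i-\phi_i*\psi^{*}_a$. Linearity of $Q_X$ gives $Q_X(f-f*\psi^{*}_a)=\sum_i Q_X\bigl(\sum_k c_i(k)\eta_i(\cdot-k)\bigr)$, and Lemma \ref{Qoperator} yields a bound $\le C\sum_i\|c_i\|_{\ell^{p,q}}\|\phi_i-\phi_i*\psi^{*}_a\|_{W(L^{1,1})}$. Since by Lemma \ref{convolution relation} each $\phi_i*\psi^{*}_a\in W(L^{1,1})$ and is continuous, Lemma \ref{lem:average function} gives $\|\phi_i-\phi_i*\psi^{*}_a\|_{W(L^{1,1})}\to0$ as $a\to0^{+}$; together with $\sum_i\|c_i\|_{\ell^{p,q}}\le\sqrt{r}\,D_2\|f\|_{L^{p,q}}$, choosing $a_0$ small (with $\gamma_0$, hence $C$, already fixed) makes this term at most $\frac{1}{4\|P\|}\|f\|_{L^{p,q}}$. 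Adding the two estimates and multiplying by $\|P\|$ gives the desired contraction. The delicate point — and the step I expect to require the most care — is precisely this application of the quasi-interpolant bound to $\eta_i$: since $\psi$ is only assumed to lie in $W_0(L^{1,1})$ (not to be compactly supported), $\eta_i$ need not have compact support, so Lemma \ref{Qoperator} does not apply verbatim. One must either record that the conclusion of Lemma \ref{Qoperator} extends to generators in $W_0(L^{1,1})$ (using that $\eta_i\in W_0(L^{1,1})$ and that $\hbox{osc}_{\gamma}(\eta_i)$ has controlled $W(L^{1,1})$-norm), or invoke the boundedness of $Q_X$ from $W(L^{p,q})$ into $L^{p,q}$, and in either case verify that the resulting constant is uniform in $a\le a_0$ so that the factor tending to zero is genuinely $\|\phi_i-\phi_i*\psi^{*}_a\|_{W(L^{1,1})}$.
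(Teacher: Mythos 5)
Your proof follows essentially the same route as the paper's: the same reduction $\|(I-PA_{X,a})f\|_{L^{p,q}}\le\|P\|_{op}\|f-A_{X,a}f\|_{L^{p,q}}$, the same decomposition $f-A_{X,a}f=(f-Q_{X}f)+Q_{X}(f-f*\psi^{*}_{a})$, the same oscillation estimate for the first term via Propositions \ref{pro:stableup2}, \ref{thm:stableup} and \ref{pro:oscillation}, and the same application of Lemma \ref{Qoperator} together with Lemma \ref{lem:average function} to the generators $\phi_{i}-\phi_{i}*\psi^{*}_{a}$ for the second, with the parameters $\gamma_{0}$ and $a_{0}$ chosen in the same order. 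The compact-support caveat you raise about applying Lemma \ref{Qoperator} to $\phi_{i}-\phi_{i}*\psi^{*}_{a}$ is a genuine subtlety that the paper's own proof passes over silently (it invokes Lemma \ref{Qoperator} for $\phi_{i}^{a}\in W_{0}(L^{1,1})$ without checking compact support or uniformity of the constant in $a$), so flagging it is a point in your favour rather than a gap in your argument.
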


\begin{proof}
Putting $f=\sum_{i=1}^{r}\sum_{k_{1}\in \Bbb Z}\sum_{k_{2}\in \Bbb Z^{d}}c_{i}(k_{1},k_{2})\phi_{i}(\cdot-k_{1},\cdot-k_{2})\in V_{p,q}(\Phi)$. Then one has
\begin{eqnarray*}
\|f-PA_{X,a}f\|_{L^{p,q}}&=&\|f-PQ_{X}f+PQ_{X}f-PA_{X,a}f\|_{L^{p,q}}\\
&\leq&\|f-PQ_{X}f\|_{L^{p,q}}+\|PQ_{X}f-PA_{X,a}f\|_{L^{p,q}}\\
&=&\|Pf-PQ_{X}f\|_{L^{p,q}}+\|PQ_{X}f-PA_{X,a}f\|_{L^{p,q}}\\
&\leq&\|P\|_{op}(\|f-Q_{X}f\|_{L^{p,q}}+\|Q_{X}f-A_{X,a}f\|_{L^{p,q}})\\
&=&\|P\|_{op}(\|f-Q_{X}f\|_{L^{p,q}}+\|Q_{X}f-Q_{X}(f*\psi^{*}_{a})\|_{L^{p,q}}).
\end{eqnarray*}

Firstly, we estimate the first term of the last inequality.
By Proposition \ref{pro:stableup2}, Proposition \ref{thm:stableup} and Cauchy inequality, one has that there exists $\gamma_{1}$ such that for any $\gamma\leq\gamma_{1}$
\begin{eqnarray}\label{eq:1}
\|f-Q_{X}f\|_{L^{p,q}}&\leq&\left\|\sum_{i=1}^{r}\sum_{k_{1}\in \Bbb Z}\sum_{k_{2}\in \Bbb Z^{d}}|c_{i}(k_{1},k_{2})|\hbox{osc}_{\gamma}(\phi_{i})(\cdot-k_{1},\cdot-k_{2})\right\|_{L^{p,q}}\nonumber\\
&\leq&\sum_{i=1}^{r}\left\|\sum_{k_{1}\in \Bbb Z}\sum_{k_{2}\in \Bbb Z^{d}}|c_{i}(k_{1},k_{2})|\hbox{osc}_{\gamma}(\phi_{i})(\cdot-k_{1},\cdot-k_{2})\right\|_{L^{p,q}}\nonumber\\
&\leq&\sum_{i=1}^{r}\|c_{i}\|_{\ell^{p,q}}\|\hbox{osc}_{\gamma}(\phi_{i})\|_{W(L^{1,1})}\nonumber\\
&\leq&\max_{1\leq i\leq r}\|\hbox{osc}_{\gamma}(\phi_{i})\|_{W(L^{1,1})}\sum_{i=1}^{r}\|c_{i}\|_{\ell^{p,q}}\nonumber\\
&\leq&\max_{1\leq i\leq r}\|\hbox{osc}_{\gamma}(\phi_{i})\|_{W(L^{1,1})}\sqrt{r}\left(\sum_{i=1}^{r}\|c_{i}\|^{2}_{\ell^{p,q}}\right)^{1/2}\nonumber\\
&\leq&D_{2}\sqrt{r}\max_{1\leq i\leq r}\|\hbox{osc}_{\gamma}(\phi_{i})\|_{W(L^{1,1})} \|f\|_{L^{p,q}}.
\end{eqnarray}

Next, we estimate the second term of the last inequality. Let $\phi^{a}_{i}=\phi_{i}-\phi_{i}*\psi^{*}_{a}\,(i=1,\cdots,r)$. Using $\phi_{i}\in W_{0}(L^{1,1})\,(i=1,\cdots,r)$, $\psi\in L^{1}$ and 
Lemma \ref{convolution relation}, it follows that $\phi^{a}_{i}\in W_{0}(L^{1,1})$. Note that $f-f*\psi^{*}_{a}=\sum_{i=1}^{r}\sum_{k\in \bbZ^{d+1}}c_{i}(k)\phi^{a}_{i}(\cdot-k)$. By Lemma \ref{Qoperator},
there exist constants $C,\gamma_{2}>0$ such that for any $\gamma\leq\gamma_{2}$
\begin{eqnarray*}
\|Q_{X}f-Q_{X}(f*\psi^{*}_{a})\|_{L^{p,q}}&\leq&\left\|Q_{X}\left(\sum_{i=1}^{r}\sum_{k\in \bbZ^{d+1}}c_{i}(k)\phi^{a}_{i}(\cdot-k)\right)\right\|_{L^{p,q}}\\
&=&\left\|\sum_{i=1}^{r}Q_{X}\left(\sum_{k\in \bbZ^{d+1}}c_{i}(k)\phi^{a}_{i}(\cdot-k)\right)\right\|_{L^{p,q}}\\
&=&\sum_{i=1}^{r}\left\|Q_{X}\left(\sum_{k\in \bbZ^{d+1}}c_{i}(k)\phi^{a}_{i}(\cdot-k)\right)\right\|_{L^{p,q}}\\
&\leq&C\sum_{i=1}^{r}\|c_{i}\|_{\ell^{p,q}}\|\phi^{a}_{i}\|_{W(L^{1,1})}.
\end{eqnarray*}
Using Proposition \ref{thm:stableup} and Cauchy inequality
\begin{eqnarray*}
\|Q_{X}f-Q_{X}(f*\psi^{*}_{a})\|_{L^{p,q}}&\leq& C\max_{1\leq i\leq r}\|\phi^{a}_{i}\|_{W(L^{1,1})}\sum_{i=1}^{r}\|c_{i}\|_{\ell^{p,q}}\\
&\leq&C\max_{1\leq i\leq r}\|\phi^{a}_{i}\|_{W(L^{1,1})}\sqrt{r}\left(\sum_{i=1}^{r}\|c_{i}\|^{2}_{\ell^{p,q}}\right)^{1/2}\\
&\leq&D_{2}\sqrt{r}C\max_{1\leq i\leq r}\|\phi^{a}_{i}\|_{W(L^{1,1})} \|f\|_{L^{p,q}}.
\end{eqnarray*}

Assume that $\epsilon>0$ is any positive real number. By Proposition \ref{pro:oscillation}, there exists $\gamma_{3}$ such that 
$D_{2}\sqrt{r}\max_{1\leq i\leq r}\|\hbox{osc}_{\gamma}(\phi_{i})\|_{W(L^{1,1})}\leq \epsilon/2$ for any $\gamma\leq\gamma_{3}$. Using Lemma \ref{lem:average function}, there exists $a_{0}>0$ such that 
for any $a\leq a_{0}$
\[
D_{2}\sqrt{r}C\max_{1\leq i\leq r}\|\phi^{a}_{i}\|_{W(L^{1,1})} \leq \epsilon/2.
\]
Hence, we choose $\gamma_{0}=\min\{\gamma_{1},\gamma_{2},\gamma_{3}\}$, one has
\[
\|f-PA_{X,a}f\|_{L^{p,q}}\leq\epsilon\|P\|_{op}\|f\|_{L^{p,q}} \quad\mbox{for any} \, f\in V_{p,q}(\Phi),\,\gamma\leq\gamma_{0},\,a\leq a_{0}.
\]
To get a contraction operator, we choose $\epsilon\|P\|_{op}<1$.
\end{proof}

\section{Proofs of main results}
\ \ \ \
In this section, we give proofs of Theorem \ref{th:suanfa} and Theorem \ref{th:suanfa-m}.

\subsection{Proof of Theorem \ref{th:suanfa}}
\ \ \ \
For convenience, let $e_{n}=f-f_{n}$ be the error after $n$ iterations. Using (\ref{eq:iterative algorithm}),
\begin{eqnarray*}
e_{n+1}&=&f-f_{n+1}\\
&=&f-f_{n}-PA_{X,a}(f-f_{n})\\
&=&(I-PA_{X,a})e_{n}.
\end{eqnarray*}
Using Lemma \ref{lem:co}, we may choose right $\gamma_{0}$ and $a_{0}$ such that for any $\gamma\leq\gamma_{0}$ and $a\leq a_{0}$ 
\[
\|I-PA_{X,a}\|_{op}=\alpha<1. 
\]
Then we obtain
\[
\|e_{n+1}\|_{L^{p,q}}\leq \alpha\|e_{n}\|_{L^{p,q}}\leq \alpha^{n}\|e_{1}\|_{L^{p,q}}.
\]
Wherewith $\|e_{n}\|_{L^{p,q}}\rightarrow0$, when $n\rightarrow\infty$. This completes the proof.

\subsection{Proof of Theorem \ref{th:suanfa-m}}
\ \ \ \
We only need to prove $I-PA_{X}$ is  a contraction operator on $V_{p,q}(\Phi)$.

Let $f=\sum_{i=1}^{r}\sum_{k_{1}\in \Bbb Z}\sum_{k_{2}\in \Bbb Z^{d}}c_{i}(k_{1},k_{2})\phi_{i}(\cdot-k_{1},\cdot-k_{2})\in V_{p,q}(\Phi)$. One has
\begin{eqnarray}\label{eq:2}
\|f-PA_{X}f\|_{L^{p,q}}&=&\|f-PQ_{X}f+PQ_{X}f-PA_{X}f\|_{L^{p,q}}\nonumber\\
&\leq&\|f-PQ_{X}f\|_{L^{p,q}}+\|PQ_{X}f-PA_{X}f\|_{L^{p,q}}\nonumber\\
&=&\|Pf-PQ_{X}f\|_{L^{p,q}}+\|PQ_{X}f-PA_{X}f\|_{L^{p,q}}\nonumber\\
&\leq&\|P\|_{op}(\|f-Q_{X}f\|_{L^{p,q}}+\|Q_{X}f-A_{X}f\|_{L^{p,q}}).
\end{eqnarray}

Putting $f_{i}=\sum_{k_{1}\in \Bbb Z}\sum_{k_{2}\in \Bbb Z^{d}}c_{i}(k_{1},k_{2})\phi_{i}(\cdot-k_{1},\cdot-k_{2}),\,(1\leq i\leq r)$. Then $f_{i}\in V^{p,q}(\phi_{i})$ for $1\leq i\leq r$ and $f=\sum_{i=1}^{r}f_{i}$. For each $f_{i}$, one has
\begin{eqnarray*}
|Q_{X}f_{i}-A_{X}f_{i}|&=&\left|\sum_{j\in\mathbb{J}}\sum_{k\in\mathbb{J}}\left(f_{i}(x_{j},y_{k})-\left \langle f_{i},\psi_{x_{j},y_{k}} \right \rangle\right)\beta_{j,k}\right|\\
&=&\left|\sum_{j\in\mathbb{J}}\sum_{k\in\mathbb{J}}\left(\int_{\bbR^{d+1}}(f_{i}(x_{j},y_{k})-f_{i}(t))\overline{\psi_{x_{j},y_{k}}(t)}dt\right)\beta_{j,k}\right|\\
&\leq&\sum_{j\in\mathbb{J}}\sum_{k\in\mathbb{J}}\int_{\bbR^{d+1}}|f_{i}(x_{j},y_{k})-f_{i}(t)||\psi_{x_{j},y_{k}}(t)|dt\beta_{j,k}\\
&\leq&\sum_{j\in\mathbb{J}}\sum_{k\in\mathbb{J}}\hbox{osc}_{a}(f_{i})(x_{j},y_{k})\int_{\bbR^{d+1}}|\psi_{x_{j},y_{k}}(t)|dt\beta_{j,k}\\
&\leq&M\sum_{j\in\mathbb{J}}\sum_{k\in\mathbb{J}}\hbox{osc}_{a}(f_{i})(x_{j},y_{k})\beta_{j,k}\\
&\leq&M\sum_{j\in\mathbb{J}}\sum_{k\in\mathbb{J}}\sum_{k_{1}\in \Bbb Z}\sum_{k_{2}\in \Bbb Z^{d}}|c_{i}(k_{1},k_{2})|\hbox{osc}_{a}(\phi_{i})(x_{j}-k_{1},y_{k}-k_{2})\beta_{j,k}.
\end{eqnarray*}
By Proposition \ref{pro:stableup2} and Proposition \ref{pro:in Wiener space}, there exists $a_{1}$ such that for any $a\leq a_{1}$
\begin{eqnarray*}
\|Q_{X}f_{i}-A_{X}f_{i}\|_{L^{p,q}}\leq MC\|c_{i}\|_{\ell^{p,q}}\|\hbox{osc}_{a}(\phi_{i})\|_{W(L^{1,1})}.
\end{eqnarray*}
Therewith
\[
\|Q_{X}f-A_{X}f\|_{L^{p,q}}\leq MC\sum_{i=1}^{r}\|c_{i}\|_{\ell^{p,q}}\|\hbox{osc}_{a}(\phi_{i})\|_{W(L^{1,1})}.
\]
Thus by the proof of Lemma \ref{lem:co}
\begin{eqnarray}\label{eq:3}
\|Q_{X}f-A_{X}f\|_{L^{p,q}}\leq MCD_{2}\sqrt{r}\max_{1\leq i\leq r}\|\hbox{osc}_{a}(\phi_{i})\|_{W(L^{1,1})} \|f\|_{L^{p,q}}.
\end{eqnarray}
Using (\ref{eq:1}), (\ref{eq:2}) and (\ref{eq:3}), one has that there exists $\gamma_{1}$ such that for any $\gamma\leq\gamma_{1}$ and $a\leq a_{1}$
\begin{eqnarray*}
\|f-PA_{X}f\|_{L^{p,q}}&\leq&\|P\|_{op}(D_{2}\sqrt{r}\max_{1\leq i\leq r}\|\hbox{osc}_{\gamma}(\phi_{i})\|_{W(L^{1,1})}\\
&&\quad+MCD_{2}\sqrt{r}\max_{1\leq i\leq r}\|\hbox{osc}_{a}(\phi_{i})\|_{W(L^{1,1})} )\|f\|_{L^{p,q}}.
\end{eqnarray*}

Assume that $\epsilon>0$ is any positive real number. By Proposition \ref{pro:oscillation}, there exists $\gamma_{2}$ such that for any $\gamma\leq\gamma_{2}$
$$D_{2}\sqrt{r}\max_{1\leq i\leq r}\|\hbox{osc}_{\gamma}(\phi_{i})\|_{W(L^{1,1})}\leq \epsilon/2.$$
Using Lemma \ref{lem:average function}, there exists $a_{2}>0$ such that 
for any $a\leq a_{2}$
\[
MCD_{2}\sqrt{r}\max_{1\leq i\leq r}\|\hbox{osc}_{a}(\phi_{i})\|_{W(L^{1,1})} \leq \epsilon/2.
\]
Hence, we choose $\gamma_{0}=\min\{\gamma_{1},\gamma_{2}\}$ and $a_{0}=\min\{a_{1},a_{2}\}$, one has
\[
\|f-PA_{X}f\|_{L^{p,q}}\leq\epsilon\|P\|_{op}\|f\|_{L^{p,q}} \quad\mbox{for any} \, f\in V_{p,q}(\Phi),\,\gamma\leq \gamma_{0},\,a\leq a_{0}.
\]
To get a contraction operator, we choose $\epsilon\|P\|_{op}<1$.

\section{Conclusion}
\ \ \ \
In this paper, we study nonuniform average sampling problem in multiply generated shift-invariant subspaces of mixed Lebesgue spaces. We discuss two types of average sampled values. Two fast reconstruction algorithms for this two types of average sampled values are provided. Studying $L^{p,q}$-frames in multiply generated shift-invariant subspaces of mixed Lebesgue spaces is the goal of future work.
\vspace{10pt}
\\
\textbf{Acknowledgements}

This work was supported partially by the
National Natural Science Foundation of China (11326094 and 11401435).



\end{document}